\theoremstyle{plain}
\newtheorem{thm}{Theorem}[section]
\newtheorem{cor}{Corollary}[section]
\newtheorem{lem}{Lemma}[section]
\newtheorem*{thm*}{Theorem}
\newtheorem*{lem*}{Lemma}
\newtheorem*{cor*}{Corollary}
\newtheorem*{rem*}{Remark}
\newtheorem{prop}[thm]{Proposition}
\newtheorem*{defn*}{Definition}
\theoremstyle{definition}
\newtheorem*{hypzerochi}{Hypothesis $\mathscr{H}_{0, \chi}$}
\newtheorem*{hypdis}{Hypothesis $\mathscr{D}$}
\newcommand{\be}{\begin{equation}}
\newcommand{\ee}{\end{equation}}
\renewcommand{\a}{\alpha}
\newcommand{\s}{\sigma}
\renewcommand{\b}{\beta}
\renewcommand{\r}{\rho }
\newcommand{\g}{\gamma }
\newcommand{\e}{\epsilon}
\renewcommand{\d}{\delta}   
\newcommand{\z}{\zeta}
\newcommand*{\bigs}[1]{\scalebox{1.2}{\ensuremath#1}}
\newcommand*{\bigss}[1]{{\hbox{$\left#1\vbox to11\p@{}\right.\n@space$}}}
\newcommand{\sdfrac}[2]{\mbox{\small$\displaystyle\frac{#1}{#2}$}}
\title[A Central Limit Theorem for Linear Combinations]
{A Central Limit Theorem  for Linear Combinations of Logarithms 
of Dirichlet $L$-functions Sampled at the Zeros of the Zeta Function}
\author[F. \c{C}\.{\i}\c{c}ek, S. M. Gonek, and S. J. Kirila]{Fatma \c{C}\.{\i}\c{c}ek, Steven M. Gonek, and Scott J. Kirila}
\address{University of Northern British Columbia\\ Department of Mathematics and Statistics \\ 3333 University Way\\ Prince George, BC\ V2N 4Z9\\ Canada}
\email{cicek@unbc.ca}
\address{Department of Mathematics, University of Rochester, Rochester, NY 14627, USA}
\email{gonek@math.rochester.edu \quad  }
\address{Cincinnati, OH, USA}
\email{kirilasj@gmail.com} 
\subjclass[2010]{11M06, 11M26}
\keywords{Central limit theorem, linear combinations of Dirichlet $L$-functions, nontrivial zeros, $a$-values}
\begin{document}

\thanks{This article was first published in the Ramanujan Journal, volume 67 (31), 2025 by Springer Nature.}

\maketitle

\begin{abstract}
Let $L(s, \chi_1), \ldots, L(s, \chi_N)$ be primitive Dirichlet $L$-functions different from the Riemann zeta function. Under suitable hypotheses we prove that  any linear combination $a_1\log|L(\rho,\chi_1)|+\dots+a_N\log|L(\rho,\chi_N)|$ has an approximately normal distribution as $T\to \infty $ with mean $0$ and variance $ \tfrac12 \big({a_1}^2+\dots+{a_N}^2\big)\log\log T.$  Here  
  $a_1, a_2, \ldots, a_N \in \mathbb{R}$, and $\rho$ runs over the nontrivial zeros of the zeta function with
$0< \Im\rho \leq T$.
  From this  we  deduce that  the vectors 
$\big(\log|L(\rho,\chi_1)|/\sqrt{ \frac12 \log\log T}, \ldots, \log|L(\rho,\chi_N)|/\sqrt{\frac12 \log\log T}\,\big)$ have approximately an $N$-variate normal distribution whose components are approximately mutually independent as $T\to \infty$.
 We   apply these results to study the proportion of the $\r$ that are zeros or $a$-values of  linear combinations  of the form $c_1 L(\rho, \chi_1)+ \cdots + c_N L(\rho, \chi_N)$ with complex $c_i$ as coefficients.
\end{abstract}

\section{Introduction}\label{intro}

Let $\z(s)$ denote the Riemann zeta function, $L(s, \chi)$ a primitive Dirichlet $L$-function with character $\chi$ modulo $m$, and let $s=\s+it$ be a complex variable.
 The first author recently proved that if the Riemann hypothesis and a standard well-spacing hypothesis about the nontrivial zeros $\r=\frac12+i\g$ of $\z(s)$ 
 are assumed, then the values of $\log|\z(\r+z)|$  with $0<\gamma\leq T$ and $|z|\neq 0$ small are approximately normally distributed  as $T\to\infty$. She proved a similar result for     $\log|\z'(\r)|$ again as $\r$ runs over  the nontrivial zeros of the zeta function, thus providing a different proof with a better error term of a result of Hejhal~\cite{H}.

Our main goal here is to show that, under suitable hypotheses, this normal (or Gaussian) behavior is also exhibited by the real parts of the logarithms of primitive Dirichlet $L$-functions and by  linear combinations of such functions when 
sampled at the nontrivial zeros $\r$ of the zeta function.  We further show  that the logarithms of distinct primitive $L$-functions at the zeros $\r$   are approximately statistically independent. These results have unconditional continuous counterparts  for the functions $\log | L(\frac12+it) |$ (for example, see Selberg~\cite{S} and Hsu and Wong~\cite{Hsu Wong} for Dirichlet $L$-functions).
Apart from being of interest in their own right, they also  reinforce the rule of thumb, well known to experts, that primitive Dirichlet $L$-functions behave independently from one another in a variety of ways. For example, the theorems just mentioned support the idea that  the zeros of the zeta function are not particularly special   as far as  other Dirichlet $L$-functions are concerned.

Before stating our results, we list the three hypotheses we require. 
 First, we   assume the Generalized Riemann hypothesis (GRH) throughout. This  asserts that all nontrivial zeros of all primitive Dirichlet $L$-functions lie on the critical line $\Re s=\frac12$. We also use
\begin{hypdis}
The nontrivial zeros of primitive Dirichlet $L$-functions never coincide with the zeros of the Riemann zeta-function. 
\end{hypdis}
Note that Hypothesis $\mathscr D$ is a special consequence of  the much stronger Grand Simplicity Hypothesis~\cite{RS}, which states that the ordinates of all nontrivial zeros of all primitive Dirichlet  $L$-functions are linearly independent over the rationals.

Next let $\r_{\chi}=\tfrac12+i\g_\chi$ denote a generic nontrivial zero of $L(s, \chi)$, and let $N(T)$ denote the number of zeros $\rho=\frac12+i\g$ of the Riemann zeta-function with $0<\g \leq T$. Our final hypothesis is a `zero-spacing' hypothesis.

\begin{hypzerochi}\label{hypothesis 0}
We have
        \[
      \lim_{\epsilon \to 0^+}  \limsup_{T\to\infty}\frac{1}{N(T)}
        \#\bigg\{0 < \g \leq T: \text{there exists a }\,  \r_\chi \text{ such that } \bigs|\g_\chi-\g\bigs| \leq \frac{\epsilon}{\log T}\bigg\}=0 .
        \]
\end{hypzerochi}

This roughly says that nontrivial zeros of a given Dirichlet $L$-function do not often cluster near nontrivial zeros of the zeta function. Analogous hypotheses have been used in a number of different settings (confer~\cite{BH 1, BH 2, C, CG, H, L}). 

Hypothesis $\mathscr{H}_{0, \chi}$ follows from an extension by Murty and Perelli~\cite{MP} of Montgomery's pair correlation conjecture to pairs of  functions in the Selberg class of $L$-functions. In the particular case of the Riemann zeta-function and a primitive Dirichlet $L$-function, they argue that if the GRH is true and if
$$
F(\a, T) = \frac{\pi}{T\log T} \sum_{-T\leq \g, \g_\chi\leq T} T^{i\a(\g-\g_\chi) }w(\g-\g_\chi),
$$
where $w(u)=\frac{4}{4+u^2}$, then one has
$$
F(\a, T) =
\begin{cases}
(1+o(1)) T^{-2\a} \log T+o(1) &\quad\hbox{if} \quad |\a|\leq 1,\\
o(1 ) &\quad\hbox{if} \quad |\a|\geq 1,
\end{cases}
$$
where the last estimate holds uniformly in any bounded $\a$ interval. As in Montgomery's work, the assertion for $|\a|\leq 1$ can be proved assuming GRH. From these estimates for $F(\alpha, T)$ they deduce that for $\d>0$,
\[
\sum_{-T\leq  \g, \g_\chi\leq T} \Bigg(\frac{\sin(\frac\d2(\g-\g_\chi)\log T)}{\frac\d2(\g-\g_\chi)\log T}\Bigg)^2
\sim \frac1\d \frac{T\log T}{\pi}.
\]
Now, $\big( {\sin x}/{x}\big)^2\geq \frac9{10}$ if $|x|\leq \frac{1}{2}$. Therefore, if $|\g-\g_\chi|\leq \e/\log T$ with 
$0<\e<1$, and we take $\d=1/\e$, then 
$$
\Bigg(\frac{\sin( ({1}/{2\e})(\g-\g_\chi)\log T)}{({1}/{2\e})(\g-\g_\chi)\log T}\Bigg)^2 \geq \frac{9}{10}.
$$
Thus we find that
\[
\frac{9}{10} \sum_{-T\leq  \g \leq T} \sum_{\substack{ -T\leq  \g_\chi \leq T\\  |\g-\g_\chi | \leq \e/\log T }} 1 
\leq \sum_{-T\leq  \g, \g_\chi\leq T} \Bigg(\frac{\sin(({1}/{2\e})(\g-\g_\chi)\log T)}{({1}/{2\e})(\g-\g_\chi)\log T}\Bigg)^2
\sim  \frac{\e T\log T}{\pi}.
\]
Letting $\e\to0$ gives Hypothesis $\mathscr{H}_{0, \chi}$.

Our first result is the following theorem.

\begin{thm}\label{distr of linear combination}
Let
\[
\mathcal{L}(\r)=a_1\log|L(\r, \chi_1)|+\dots+a_N\log|L(\r, \chi_N)|, \]
where $N$ is a fixed positive integer, $ a_1, \dots, a_N \in \mathbb R$,
and $\chi_1,\dots, \chi_N$ are distinct nonprincipal primitive Dirichlet characters. Assume the Generalized Riemann hypothesis and Hypothesis $\mathscr{D}.$ Further, suppose that for each $1\leq j \leq N$, Hypothesis $\mathscr{H}_{0,\chi_j}$ is true. Then for $A<B$ and $T$ sufficiently large, we have
\be
\begin{split}\label{eq in thm 1}
\frac{1}{N(T)}\#\bigg\{0< \g \leq T:  \frac{\mathcal{L}(\rho)}{\sqrt{\tfrac{1}{2}\big( {a_1}^2+\dots+{a_N}^2\big)
\log\log T}} \in [A, B]\bigg\} 
\sim \sdfrac{1}{\sqrt{2\pi}} \int_A^B e^{-\tfrac{x^2}{2}} \mathop{dx}  .
\end{split}
\ee
\end{thm}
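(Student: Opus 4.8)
The plan is to establish the central limit theorem by the method of moments: I will show that for each fixed nonnegative integer $k$, the normalized moments
\[
\frac{1}{N(T)}\sum_{0<\g\le T}\bigg(\frac{\mathcal L(\r)}{\sqrt{\tfrac12(a_1^2+\dots+a_N^2)\log\log T}}\bigg)^{\!k}
\]
converge as $T\to\infty$ to the $k$-th moment of the standard Gaussian, namely $0$ for $k$ odd and $(k-1)!!$ for $k$ even. Since the Gaussian is determined by its moments, this yields \eqref{eq in thm 1}. The first step is to replace $\log|L(\r,\chi_j)|$ by a short Dirichlet polynomial over primes. Under GRH one has the explicit-formula / approximation
\[
\log|L(\tfrac12+i\g,\chi_j)| = \Re\sum_{p^\nu\le X}\frac{\chi_j(p^\nu)}{\nu\, p^{\nu(1/2+i\g)}} + (\text{error}),
\]
valid for a suitable truncation length $X=X(T)$ (a small power of $T$, or $T^{1/\log\log T}$-type), and it suffices to keep the $\nu=1$ terms since prime powers with $\nu\ge2$ contribute a bounded amount. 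The role of Hypothesis $\mathscr{H}_{0,\chi_j}$ is exactly to control this error in mean: the approximation degrades only for those $\g$ lying within $\ll 1/\log T$ of a zero $\g_{\chi_j}$, and the hypothesis says the density of such $\g$ tends to $0$, so these exceptional ordinates do not affect the limiting moments. Hypothesis $\mathscr{D}$ guarantees $L(\r,\chi_j)\ne0$ so that $\log|L(\r,\chi_j)|$ is finite for every zero $\r$ in the count.

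The second step is to compute the moments of the resulting linear combination of prime sums
\[
P(\g) := \Re\sum_{p\le X}\frac{1}{p^{1/2+i\g}}\Big(a_1\chi_1(p)+\dots+a_N\chi_N(p)\Big)
\]
by expanding the $k$-th power and averaging over the zeros. The basic averaging input is an estimate of the form
\[
\frac{1}{N(T)}\sum_{0<\g\le T} p^{-i\g} = \frac{1}{N(T)}\sum_{0<\g\le T}\exp(-i\g\log p)
= -\frac{\Lambda(p)}{\sqrt p\,\log T}\cdot(1+o(1)) + O(\dots),
\]
coming from the Landau–Gonek explicit formula for $\sum_{0<\g\le T} x^{i\g}$; the key qualitative fact is that such oscillatory sums over zeros of $\zeta$ are small (power-saving in the relevant ranges) unless the frequency is trivial. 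Hence when the product $p_1^{\pm i\g}\cdots p_k^{\pm i\g}$ is expanded, only the ``diagonal'' terms — where the primes pair off so that the total frequency vanishes — survive in the limit, and these produce exactly the Gaussian combinatorics: the number of perfect matchings $(k-1)!!$ when $k$ is even, times the variance factor $\tfrac12(a_1^2+\dots+a_N^2)\log\log T$ raised to the $k/2$ power. Here the orthogonality of distinct primitive characters is what makes the cross-character diagonal terms vanish and leaves only $\sum_j a_j^2 \sum_{p\le X} \tfrac{1}{p}\sim (a_1^2+\dots+a_N^2)\log\log T$; the factor $\tfrac12$ comes from taking real parts. Odd moments vanish because an odd number of prime factors cannot have total frequency zero on the diagonal, and the off-diagonal is negligible.

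The main obstacle, and the step that requires the most care, is controlling the off-diagonal contributions uniformly as $k$ grows with the truncation length $X$ chosen as a function of $T$: one must choose $X$ small enough that the cumulative error from the explicit-formula approximation (including the $\mathscr{H}_{0,\chi_j}$-exceptional set and the Landau term $\Lambda(p)/(\sqrt p\log T)$ summed over the expanded $k$-fold products) is $o\big((\log\log T)^{k/2}\big)$, yet large enough that $\sum_{p\le X}1/p$ is genuinely asymptotic to $\log\log T$. Balancing these forces $X$ to be a slowly growing power of $T$, something like $X=T^{1/k}$ or $X = T^{o(1)}$, and then one must check the number-of-terms bounds $\big(\sum_{p\le X} p^{-1}\big)^k$ against the savings, together with a combinatorial bound on near-diagonal terms (primes agreeing up to multiplicity). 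A secondary technical point is passing from the moments of $P(\g)$ back to moments of $\mathcal L(\r)$: one uses Hölder's inequality together with an $L^2$ (or higher even moment) bound on the approximation error, so the error length $X$ must also be compatible with an $L^{2k}$ estimate. Once these are in place, the method of moments closes and gives Theorem~\ref{distr of linear combination}; the deduction of the multivariate independence statement is then a routine consequence of computing mixed moments by the same diagonal argument and invoking character orthogonality across the distinct $\chi_j$.
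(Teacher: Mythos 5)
Your proposal uses the method of moments, whereas the paper proves the result via characteristic functions and L\'evy's continuity theorem, but the underlying technical ingredients are essentially the same: both start from the hybrid explicit formula approximating $\log|L(\r,\chi_j)|$ by a truncated prime sum (the paper's Lemma~\ref{log L approximate formula}), both use Hypothesis~$\mathscr{H}_{0,\chi_j}$ to discard the small set of $\g$ near zeros of $L(s,\chi_j)$ where the $\log^+(1/(\eta_{\chi_j}(\g)\log X))$ term misbehaves, both compute moments of the resulting Dirichlet polynomial over the zeros of $\zeta$ via a Landau--Gonek-type formula (the paper uses Corollary~3.2 of~\cite{C} inside Lemma~\ref{lemma 3.4 linear combination}), and both extract the variance $\tfrac12\sum a_j^2\log\log T$ from the orthogonality of the distinct primitive characters $\chi_i\overline\chi_j$. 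The paper packages the moment information differently: it introduces the random model $\CMcal P_{\mathcal L}(\underline\theta)$, matches moments of $\Re\CMcal P_{\mathcal L}(\g)$ against moments of the model up to a secondary Landau-type term, resums the moments with weights $(i\omega)^k/k!$ to form the characteristic function (Lemma~\ref{fourier linear combination}), and evaluates the model's characteristic function exactly as a product of Bessel functions (Corollary~\ref{cor: fourier}). The two packagings buy different things. Your moments route avoids introducing the random model and the Bessel-function manipulations, and the diagonal combinatorics $(k-1)!!$ are transparent. But it has a real cost at the transfer step from the prime sum to $\mathcal L(\r)$: the paper gets away with only an $L^1$ bound on the approximation error (via $|e^{ix}-e^{iy}|\le|x-y|$ in Proposition~\ref{last prop}), which is easy to obtain from the Cauchy--Schwarz and mean-square estimates available, whereas your plan requires $L^p$ control of the error for all $p$ up to $k$ (you acknowledge needing an $L^{2k}$ estimate, uniformly as $k$ grows). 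That is considerably more delicate here because the dominant error $r_{\chi,4}$ involves both the large factor $E_{\chi}(X,\g)$ and the $\log^+$ of the reciprocal zero gap, and $\mathscr{H}_{0,\chi}$ only gives a zero-density statement, not pointwise or higher-moment control. This is the step you'd need to flesh out carefully; the characteristic-function route sidesteps it, which is the main reason the paper takes it.
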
 
When a  quantity such as the one on the left-hand side of \eqref{eq in thm 1} tends to a normal distribution function as $T\to\infty$, we will say that 
 $$
  \frac{\mathcal{L}(\rho)}{\sqrt{\tfrac{1}{2}\big( {a_1}^2+\dots+{a_N}^2\big) \log\log T}}
 $$
has an \emph{approximately normal distribution} (in this case with mean $0$ and variance $1$).

On the assumption of the GRH and Hypothesis $\mathscr{D}$ alone (that is,  without assuming a zero-spacing hypothesis) one can prove a similar theorem for the linear combinations
\[
\mathcal A(\r) =a_1\arg L(\r, \chi_1)+\dots+a_n\arg L(\r, \chi_N)  .
\]
 (See  Theorem 1.2 in~\cite{C} for the case of $\arg\zeta(\r+z)$.) 

Before stating our next theorem, we remind the reader of some terminology from probability theory.
Let $\mathbf X =(X_1, X_2, \ldots, X_N)$ be a random vector in $\mathbb R^n$.  The mean or expectation of $X_i$  
is $\mathbb E(X_i) =\mu_i$, the variance is   $\mathrm{Var}(X_i)= \s_{i,i} = \mathbb E(X_i -\mu_i)^2$, and the covariance between  $X_i$ and $X_j$ is $\mathrm{Cov}(X_i, X_j)= \s_{i,j} = \mathbb E\big((X_i -\mu_i)(X_j -\mu_j)\big)$. We say that $\mathbf X$ has an N-variate normal distribution if its 
probability density function is given by
\be\notag
f_{\mathbf X}(\mathbf x) =\frac{1}{(2\pi)^{\frac N2} \sqrt{\mathrm{det} \mathbf C} }
e^{-\frac12 (\mathbf x-\boldsymbol \mu)^T \mathbf{C}^{-1} (\mathbf x-\boldsymbol \mu)},
\ee
where $\mathbf x=(x_1, x_2, \ldots, x_N)$,
$\boldsymbol \mu= (\mu_1, \mu_2,  \ldots, \mu_N)$, and $\mathbf C=(\s_{i, j})$ is the symmetric $N\times N$ covariance matrix.

Using Theorem~\ref{distr of linear combination}, we  will prove  
  
 \begin{thm}\label{n-variate} 
 Let $\chi_1, \chi_2, \ldots , \chi_N$ be distinct primitive Dirichlet characters, and assume the same hypotheses as in Theorem~\ref{distr of linear combination}. Then for $T$ sufficiently large and $0<\g\leq T$, the  vector 
\be\notag
\Bigg(
 \frac{ \log|L(\r, \chi_1)|}{\sqrt {\tfrac12 \log\log T}}, \ \dots \ ,\frac{\log|L(\r, \chi_N)|}{\sqrt {\tfrac12 \log\log T}} \Bigg)
\ee
has  approximately an $N$-variate normal distribution with mean vector $\mathbf 0_N$ and covariance matrix $\, \mathbf I_N$, the $N\times N$ identity matrix. In particular, the functions  $\log|L(\r, \chi_j)|/{\sqrt {\tfrac12 \log\log T}}, \\1\leq j \leq N$, are approximately mutually independent as $T\to \infty$.
 \end{thm}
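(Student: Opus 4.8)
The plan is to deduce Theorem~\ref{n-variate} from Theorem~\ref{distr of linear combination} by the Cram\'er--Wold device. For each large $T$, let $\mu_T$ be the probability measure on $\mathbb{R}^N$ given by
\[
\mu_T = \frac{1}{N(T)} \sum_{0 < \gamma \le T} \delta_{\mathbf{v}(\rho)}, \qquad
\mathbf{v}(\rho) = \Bigg( \frac{\log|L(\rho, \chi_1)|}{\sqrt{\tfrac12 \log\log T}},\ \dots,\ \frac{\log|L(\rho, \chi_N)|}{\sqrt{\tfrac12 \log\log T}} \Bigg),
\]
where $\rho = \tfrac12 + i\gamma$ runs over the nontrivial zeros of $\zeta$, counted with multiplicity. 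Theorem~\ref{n-variate} is then the assertion that $\mu_T$ converges weakly to the standard $N$-variate Gaussian measure as $T \to \infty$, together with the elementary fact that the coordinates of a Gaussian vector with diagonal covariance matrix are independent.

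First I would fix an arbitrary $\mathbf{a} = (a_1, \dots, a_N) \in \mathbb{R}^N$ and look at the pushforward of $\mu_T$ under the linear map $\mathbf{x} \mapsto \mathbf{a} \cdot \mathbf{x}$. When $\mathbf{a} \ne \mathbf{0}$ we have $\mathbf{a} \cdot \mathbf{v}(\rho) = \mathcal{L}(\rho)/\sqrt{\tfrac12 \log\log T}$, and since $\sqrt{\tfrac12(a_1^2 + \dots + a_N^2) \log\log T} = \|\mathbf{a}\|\sqrt{\tfrac12 \log\log T}$, Theorem~\ref{distr of linear combination} says precisely that $(\mathbf{a} \cdot \mathbf{v}(\rho))/\|\mathbf{a}\|$ is approximately standard normal, i.e.\ this pushforward converges weakly to the $N(0, \|\mathbf{a}\|^2)$ law. (For $\mathbf{a} = \mathbf{0}$ the pushforward is the point mass at $0$, so the statement holds trivially.)

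Next I would invoke the characteristic-function form of the Cram\'er--Wold argument. For $\mathbf{t} \in \mathbb{R}^N$, the characteristic function $\widehat{\mu_T}(\mathbf{t}) = \int_{\mathbb{R}^N} e^{i \mathbf{t} \cdot \mathbf{x}}\, d\mu_T(\mathbf{x})$ equals the characteristic function of the one-dimensional pushforward of $\mu_T$ under $\mathbf{x} \mapsto \mathbf{t} \cdot \mathbf{x}$ evaluated at the real number $1$; by the previous step this tends to $e^{-\|\mathbf{t}\|^2/2}$, the characteristic function of $N(\mathbf{0}, \mathbf{I}_N)$. Since this limit is continuous at the origin, L\'evy's continuity theorem gives $\mu_T \Rightarrow N(\mathbf{0}, \mathbf{I}_N)$, which is the claimed $N$-variate normality; and because $\mathbf{I}_N$ is diagonal, the functions $\log|L(\rho, \chi_j)|/\sqrt{\tfrac12 \log\log T}$, $1 \le j \le N$, are asymptotically mutually independent.

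Because all the analytic substance is already contained in Theorem~\ref{distr of linear combination}, there is no serious obstacle here; the points that require care are bookkeeping ones. In particular, the vanishing of the off-diagonal entries of the covariance matrix is exactly the statement that the normalizing variance in Theorem~\ref{distr of linear combination} is $\tfrac12(a_1^2 + \dots + a_N^2)\log\log T$ with no cross terms $a_i a_j$. One should also note that passing to the limit discards the quantitative error term of Theorem~\ref{distr of linear combination}; if an effective rate of convergence for the $N$-variate statement were wanted, one would instead run a smoothed version of the Cram\'er--Wold argument and propagate that error term rather than let $T \to \infty$.
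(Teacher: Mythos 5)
Your proposal is correct and follows essentially the same route as the paper: both deduce the $N$-variate normality from the univariate normality of every linear combination (the Cram\'er--Wold device), and both read off the identity covariance matrix from the fact that the variance in Theorem~\ref{distr of linear combination} is $\tfrac12(a_1^2+\cdots+a_N^2)\log\log T$ with no cross terms. The paper determines the covariance entries by specializing $\mathbf a$ to standard basis vectors and to $e_i+e_j$, while you read it off directly from the limiting characteristic function $e^{-\|\mathbf t\|^2/2}$; this is only a cosmetic difference.
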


\begin{cor*}\label{indep} 
    Let $ A_j < B_j $ for  $ 1\leq j\leq N$.
    Then under the same hypotheses as in Theorem~\ref{distr of linear combination} we have
            \begin{align*}
           \lim_{T\to\infty } \frac{1}{N(T)}
           \#\bigg\{0<\g& \leq T: \frac{\log|L(\r, \chi_j)|}{\sqrt {\tfrac12 \log\log T}} \in [A_j, B_j] \,\, \text{ for  } \,  1\leq j \leq N \bigg\}  
            \\
            =& \prod_{j=1}^N \lim_{T\to\infty }\frac{1}{N(T)}\#\bigg\{0<\g \leq T: 
            \frac{\log|L(\r, \chi_j)|}{\sqrt {\tfrac12 \log\log T}} \in [A_j, B_j] \bigg\}\\
            =&\Big(\frac1{2\pi}\Big)^{\frac N2}\prod_{j=1}^{N} \int_{A_j}^{B_j}e^{-\frac{x_j^2}{2}}  dx_j .
           \end{align*}
    \end{cor*}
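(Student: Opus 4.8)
The plan is to deduce the Corollary from Theorem~\ref{n-variate}, with Theorem~\ref{distr of linear combination} supplying the one-dimensional marginals, using only two elementary facts from probability: a multivariate normal law with diagonal covariance matrix has independent coordinates, and every box is a continuity set for an absolutely continuous measure.

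I would first recall that Theorem~\ref{n-variate} amounts to the weak convergence of the empirical measures $\mu_T=\frac{1}{N(T)}\sum_{0<\gamma\le T}\delta_{\mathbf V_T(\rho)}$, where $\mathbf V_T(\rho)=\big(\log|L(\rho,\chi_1)|/\sqrt{\tfrac12\log\log T},\dots,\log|L(\rho,\chi_N)|/\sqrt{\tfrac12\log\log T}\big)$, to the law $N(\mathbf 0_N,\mathbf I_N)$. (This itself follows from Theorem~\ref{distr of linear combination} by the Cram\'er--Wold device: for each $\mathbf a\in\mathbb R^N$ that theorem gives that the pushforward of $\mu_T$ under $\mathbf x\mapsto\langle\mathbf a,\mathbf x\rangle$ converges weakly to $N(0,\|\mathbf a\|^2)$, hence $\widehat{\mu_T}(\mathbf a)\to e^{-\|\mathbf a\|^2/2}$, and L\'evy's continuity theorem finishes it.)

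Next, since $N(\mathbf 0_N,\mathbf I_N)$ has a continuous density, the boundary of any box $R=[A_1,B_1]\times\cdots\times[A_N,B_N]$ is a null set for the limit, so $R$ is a continuity set. By the Portmanteau theorem (or, avoiding an abstract appeal, by sandwiching $\mathbf 1_R$ between bounded continuous functions supported on slightly larger and smaller boxes and then shrinking those boxes to $R$), Theorem~\ref{n-variate} yields
\[
\lim_{T\to\infty}\frac{1}{N(T)}\#\big\{0<\gamma\le T:\mathbf V_T(\rho)\in R\big\}=\int_R\frac{1}{(2\pi)^{N/2}}e^{-\frac12|\mathbf x|^2}\,d\mathbf x=\prod_{j=1}^N\frac{1}{\sqrt{2\pi}}\int_{A_j}^{B_j}e^{-x_j^2/2}\,dx_j,
\]
the last equality because the Gaussian density factors over coordinates; this gives the first and third members of the asserted chain. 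For the middle member I would apply Theorem~\ref{distr of linear combination} in the one-variable case ($N=1$, $a_1=1$, character $\chi_j$) to identify each factor $\frac{1}{\sqrt{2\pi}}\int_{A_j}^{B_j}e^{-x_j^2/2}\,dx_j$ with $\lim_{T\to\infty}\frac{1}{N(T)}\#\{0<\gamma\le T:\log|L(\rho,\chi_j)|/\sqrt{\tfrac12\log\log T}\in[A_j,B_j]\}$.

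The only genuinely delicate point — and the step I would flag as the main obstacle, modest though it is — is the transition from the weak-convergence conclusions of Theorems~\ref{distr of linear combination} and~\ref{n-variate} (phrased in the paper as normalized counting functions converging to a distribution function) to the statement about the counting function of a fixed box in $\mathbb R^N$. This is exactly where continuity of the limiting Gaussian is used, and it is worth writing out the sandwiching argument explicitly rather than invoking Portmanteau, since the paper does not otherwise work in the language of weak convergence of measures. Everything else is bookkeeping.
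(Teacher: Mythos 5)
Your argument is correct and follows essentially the same route as the paper, which simply deduces the corollary from Theorem~\ref{n-variate} (diagonal covariance gives independence, and the Gaussian box probability factors), with the $N=1$ case of Theorem~\ref{distr of linear combination} identifying each factor. Your explicit weak-convergence and continuity-set (sandwiching) details merely fill in what the paper leaves as ``follows immediately.''
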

The corollary follows immediately from Theorem~\ref{n-variate}.

Analogues of the results above (with appropriate modifications to our hypotheses) are true if one samples over the nontrivial zeros of any primitive Dirichlet $L$-function  not among $L(s, \chi_1), \ldots , L(s, \chi_N)$, rather than over the zeros of 
$\zeta(s)$.

As an application of the results above we consider the following question. Let 
$$
    F(s)=c_1L(s, \chi_1)+\dots+c_NL(s, \chi_N), 
$$
where  the $L(s,\chi_j)$  are distinct primitive Dirichlet $L$-functions and the coefficients $c_j$   are nonzero complex numbers. Given any complex number $a$, possibly $0$, how many solutions to 
$$
F(\r)=a
$$
are there when $\r=\frac12+i\g$ runs over the zeros of $\zeta(s)$  with $0<\g\leq T$\ ? 
We note that under similar assumptions as in the next theorem, but via a different method, the third author~\cite{Kirila}  proved that at least   $33\%$ of the nontrivial zeros of the Riemann zeta-function are not zeros of a linear combination of \emph{two} Dirichlet $L$-functions. Here we show that $100\%$ of the nontrivial zeros of the 
Riemann zeta-function are not zeros of an arbitrary linear combination of Dirichlet $L$-functions.

\begin{thm}\label{main thm 3}
Assume the Generalized Riemann hypothesis, Hypothesis $\mathscr D$, and Hypothesis $\mathscr{H}_{0, \chi_j}$ for  each of the primitive Dirichlet $L$-functions $L(s, \chi_j)$, $1\leq j \leq N$. 
    Let 
        \[
        F(s)=c_1L(s, \chi_1)+\dots+c_NL(s, \chi_N),
        \]
 where $ c_1, c_2, \dots, c_N$ are nonzero complex numbers. Then for any complex number  $a$ we have
  \be \notag
    \lim_{T\to\infty}\frac{1}{N(T)}\#\bigs\{0<\g \leq T: F(\rho)=a \bigs\} =0.
\ee
That is,  the proportion of  zeros $\r$ of $\zeta(s)$ for which $F(\r)=a$  has asymptotic density zero as $T\to \infty$. 
\end{thm}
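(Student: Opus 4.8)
The plan is to deduce Theorem~\ref{main thm 3} from Theorem~\ref{n-variate} by showing that the event $F(\rho)=a$ forces the vector $\big(\log|L(\rho,\chi_1)|,\dots,\log|L(\rho,\chi_N)|\big)$ to lie in a region that the $N$-variate normal law assigns probability zero. The starting point is the trivial observation that if $c_1L(\rho,\chi_1)+\dots+c_NL(\rho,\chi_N)=a$, then by the triangle inequality
\[
|c_1|\,|L(\rho,\chi_1)|\;\le\;|a|+\sum_{j=2}^{N}|c_j|\,|L(\rho,\chi_j)|,
\]
and similarly with the roles of the indices permuted. The point is that a single linear relation among the $L(\rho,\chi_j)$ cannot hold on a positive-density set of $\rho$, because it would force the largest of the $|L(\rho,\chi_j)|$ to be controlled by the others, whereas Theorem~\ref{n-variate} says each $\log|L(\rho,\chi_j)|/\sqrt{\tfrac12\log\log T}$ is an independent standard Gaussian in the limit, so with probability tending to $1$ exactly one of them is much larger than all the rest.

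**More precisely, first I would** fix a large parameter $V>0$ and split the zeros $\rho$ with $0<\gamma\le T$ according to the size of $M(\rho):=\max_{1\le j\le N}\log|L(\rho,\chi_j)|$ measured in units of $\sqrt{\tfrac12\log\log T}$. If $M(\rho)\le V\sqrt{\tfrac12\log\log T}$ for all $j$, then in particular every $\log|L(\rho,\chi_j)|$ is bounded above by $V\sqrt{\tfrac12\log\log T}$, an event whose density is, by Theorem~\ref{n-variate} (or even just Theorem~\ref{distr of linear combination} applied with $a_j=\delta_{j,k}$ for each $k$), at most $\big(\Phi(V)\big)^{?}$—more usefully, the relevant bound comes from the opposite tail: I want to say that for each fixed $j$ the density of $\rho$ with $\log|L(\rho,\chi_j)|\le V\sqrt{\tfrac12\log\log T}$ is close to $\Phi(V)<1$, but that alone is not small. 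The efficient route is instead: on the set where $F(\rho)=a$, let $k=k(\rho)$ be an index achieving the maximum $M(\rho)$; then
\[
|c_k|\,e^{M(\rho)}\le |a|+(N-1)\max_{j\ne k}|c_j|\,|L(\rho,\chi_j)|\le |a|+(N-1)\Big(\max_j|c_j|\Big)e^{M'(\rho)},
\]
where $M'(\rho)$ is the \emph{second}-largest of the $\log|L(\rho,\chi_j)|$. Hence either $M(\rho)$ is bounded (so all the $\log|L(\rho,\chi_j)|$ lie in a fixed bounded set, a density-$o(1)$ event once we let the bound grow slowly, e.g.\ to infinity), or $M(\rho)-M'(\rho)=O(1)$, i.e.\ the top two values of $\log|L(\rho,\chi_j)|/\sqrt{\tfrac12\log\log T}$ differ by $o(1)$.

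**Then I would** estimate the density of the set where the two largest coordinates of the scaled vector agree up to $o(1)$. By Theorem~\ref{n-variate}, for any fixed $\eta>0$,
\[
\limsup_{T\to\infty}\frac{1}{N(T)}\#\Big\{0<\gamma\le T:\ \exists\, i\ne j,\ \Big|\tfrac{\log|L(\rho,\chi_i)|-\log|L(\rho,\chi_j)|}{\sqrt{\tfrac12\log\log T}}\Big|\le \eta\Big\}
\ \le\ \sum_{i<j}\mathbb{P}\big(|G_i-G_j|\le \eta\big),
\]
where $G_1,\dots,G_N$ are i.i.d.\ standard normals; since $G_i-G_j$ is continuous (it is $N(0,2)$), the right-hand side tends to $0$ as $\eta\to0^+$. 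Combining with the dichotomy above—taking $\eta$ to be any fixed small constant larger than the $O(1)/\sqrt{\log\log T}$ gap, which is eventually $o(1)$—gives that $F(\rho)=a$ holds for at most $\big(o_\eta(1)+\text{density of a bounded set}\big)$ proportion of the zeros, and letting $\eta\to0$ afterwards finishes the argument. The bounded-$M(\rho)$ contribution is handled by noting that for any fixed $W$, the density of $\rho$ with $\log|L(\rho,\chi_j)|\le W$ for all $j$ is $\le$ density of $\log|L(\rho,\chi_1)|\le W$, which by Theorem~\ref{distr of linear combination} is $o(1)$ as $T\to\infty$ since $W/\sqrt{\tfrac12\log\log T}\to 0$ and $\Phi(0)=\tfrac12$—wait, that gives $\tfrac12$, not $0$; the correct fix is that $M(\rho)$ being bounded is \emph{not} needed as a separate case if instead we observe directly that $|c_k|e^{M(\rho)}\le|a|+(N-1)(\max|c_j|)e^{M(\rho)-g(\rho)}$ forces $e^{g(\rho)}\le \tfrac{(N-1)\max|c_j|}{|c_k|}+\tfrac{|a|}{|c_k|}e^{-M(\rho)+g(\rho)}$, and since $M(\rho)\ge M'(\rho)=M(\rho)-g(\rho)$ we get $g(\rho)\le \log\!\big(\tfrac{(N-1)\max|c_j|}{\min|c_j|}+\tfrac{|a|}{\min|c_j|}\big)=:C_0$, a constant independent of $\rho$. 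So \emph{in every case} $F(\rho)=a$ implies the gap between the top two scaled coordinates is $\le C_0/\sqrt{\tfrac12\log\log T}=o(1)$, and the density bound from the previous paragraph applies with $\eta\to0$, completing the proof.

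**The main obstacle** is purely a matter of bookkeeping rather than a genuine difficulty: one must be careful that Theorem~\ref{n-variate} is stated as convergence of the \emph{distribution function}, so passing to a statement about the joint density of differences $G_i-G_j$ requires an approximation argument (approximate the set $\{|x_i-x_j|\le\eta\ \text{for some }i\ne j\}$ by finite unions of boxes, on which Theorem~\ref{n-variate} gives the limiting measure, then use continuity of the Gaussian to let the box-mesh and then $\eta$ go to zero). Everything else—the triangle-inequality dichotomy, the uniform constant $C_0$, and the $o(1)$ gap—is elementary.
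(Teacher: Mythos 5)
Your overall strategy—use the triangle inequality to derive a constraint on the vector $\big(\log|L(\rho,\chi_1)|,\dots,\log|L(\rho,\chi_N)|\big)$ that the limiting Gaussian assigns vanishing probability—is the same as the paper's. Where the paper (via its Lemma~\ref{lem 9}) shows that for all but $o(N(T))$ zeros both the pairwise differences $\big|\log|L(\rho,\chi_i)|-\log|L(\rho,\chi_j)|\big|$ \emph{and} the individual quantities $\big|\log|L(\rho,\chi_i)|\big|$ exceed $(\log\log T)^{1/4}$, you only argue with the pairwise differences (your ``gap'' $g(\rho)=M(\rho)-M'(\rho)$), and that is where a genuine error enters.

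The flaw is in your final ``fix.'' From $F(\rho)=a$ and the triangle inequality you correctly obtain
\[
e^{g(\rho)}\ \le\ \frac{(N-1)\max_j|c_j|}{|c_k|}\ +\ \frac{|a|}{|c_k|}\,e^{-M'(\rho)},
\]
but your next step asserts that since $M(\rho)\ge M'(\rho)$, the right-hand side is at most a constant $C_0$. That is a non sequitur: bounding $e^{-M'(\rho)}$ by a constant requires $M'(\rho)\ge 0$, which is not assured, and $M'(\rho)$ can in fact be arbitrarily negative. Concretely, nothing yet rules out zeros at which $|L(\rho,\chi_{i_0})|$ is of order $1$ (so $M(\rho)=O(1)$ and $F(\rho)$ could plausibly be near $a$) while the remaining $|L(\rho,\chi_j)|$ are exponentially small, making $g(\rho)$ huge. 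Your earlier instinct—that the case ``$M(\rho)$ bounded'' must be handled separately—was the right one, but it cannot be disposed of by the union bound over pairwise differences alone; note also that $\{\max_j\log|L(\rho,\chi_j)|\le W\}$ has limiting density $(1/2)^N$, not $o(1)$, as you yourself noticed. The paper's second half of Lemma~\ref{lem 9} is precisely the missing ingredient: it shows $\big|\log|L(\rho,\chi_{i_0})|\big|>(\log\log T)^{1/4}$ for almost all $\rho$, so $|F(\rho)|=|c_{i_0}||L(\rho,\chi_{i_0})|\big(1+o(1)\big)$ is either enormous or minuscule, hence $\neq a$. Your argument can be salvaged along your own lines (for $a\neq 0$ one also gets the lower bound $M(\rho)\ge\log\big(|a|/(N\max_j|c_j|)\big)$, which squeezes $\max_j\log|L(\rho,\chi_j)|/\sqrt{\tfrac12\log\log T}$ into a shrinking interval around $0$, a continuity set of measure zero for the limit law; for $a=0$ the problematic subcase never occurs), but as written the uniform $C_0$ bound on the gap is false and the proof does not close.
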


Note that, as with our previous results,  Theorem~\ref{main thm 3} can also be modified so as to sample the linear combination $F$ over the zeros of an arbitrary primitive Dirichlet $L$-function that is different from any of those comprising the sum.


Throughout we take $T$ to be a sufficiently large positive real number. The letters $c$ and $D$ will stand for positive constants that may be different at each occurrence with $c$ standing for an absolute constant, whereas $D$ may depend on various parameters. The variables $p$ and $q,$ indexed or not, always denote prime numbers, and the variables $j, k, \ell, m$ and $n$ always denote nonnegative integers.
Finally, we define
\be\label{Psi}
\Psi(Y)=\sum_{p\leq Y} p^{-1}
\ee
so that, by Merten's estimate, $\Psi(Y) =\log\log Y +O(1)$ for $Y\geq 2$.


\section*{Acknowledgments}

This work began while Fatma \c{C}i\c{c}ek was visiting the Indian Institute of Technology Gandhinagar, whom she thanks for the hospitality. Her work on this project was later  supported by a postdoctoral fellowship of the Pacific Institute for the Mathematical Sciences at the University of Northern British Columbia.


\section{Lemmas for the proof of Theorem~\ref{distr of linear combination} }\label{sec: lemmas for Thm 1}

Most of the lemmas in this section are proved along the lines of similar ones for the zeta function in ~\cite{C}, so we will be content with sketching the proofs and referring the reader to arguments there.

To state our  lemmas we require some notation.
Let $L(s, \chi)$ be the Dirichlet $L$-function corresponding to a primitive Dirichlet character $\chi$ modulo $m$. 
For $T$ a large parameter and $X$ a parameter to be chosen later satisfying $4\le X \leq T^2$, we define 
 \be\notag
        \begin{split}
        w_X(n)=
        \begin{cases}
     \displaystyle   1&\quad \text{if} \quad 1\leq n \leq X,\\
       \displaystyle     \frac{\log{(X^2/n)}}{\log{X}} &\quad \text{if} \quad  X< n \leq X^2,\\
        0&\quad \text{if} \quad X^2 \leq n.
        \end{cases}
        \end{split}
\ee
We then put
$$
\Lambda_X(n)=\Lambda(n)w_X(n),
$$
where $\Lambda(n)$ is von Mangoldt's function.
We also write
\be\notag 
\CMcal{P}_{\chi}(\g)=\sum_{p\leq X^2}\frac{\chi(p)}{p^{1/2+i\g}} ,
\ee
and define
    \[
    \eta_{\chi}(\g)=\min_{\g_\chi} |\g-\g_\chi|,
    \]
where $\g$ is the ordinate of a zero of $\zeta(s)$  and $\g_\chi$ runs over the ordinates of nontrivial zeros $\r_\chi=\frac12+i\g_\chi$ of $L(s, \chi)$. Note that $ \eta_\chi(\g)$ never vanishes due to our assumption of Hypothesis $\mathscr{D}$. 

Our first lemma allows us to approximate $\log|L(\r, \chi)|$ by $\Re\CMcal{P}_{\chi}(\g)$.

\begin{lem}\label{log L approximate formula}
    Assume the GRH and Hypothesis $\mathscr{D}$. Let $4\leq X\leq T^2$, let $\chi$ be a primitive Dirichlet character modulo $m\geq 3$, $s=\s+it$, and $\s_1=\frac12+\frac{4}{\log X}$. Then
for $s\neq \r_\chi$ and $s$ not a trivial zero of $L(s, \chi)$, we have 
\be\notag
        \log|L(\r, \chi)|
        =\Re\CMcal{P}_{\chi}(\g)
        +O\Big(\sum_{j=1}^{4}r_{\chi, j}(X, \g)\Big)
        +O\Big(\frac{\log(m|\g|)}{\log X}\Big)+O(1),
\ee
where
        \be\notag
        \begin{split}
        r_{\chi, 1}(X, \g)&=\Big|\sum_{p\leq X^2}\frac{(1-w_X(p))\chi(p)}{p^{\frac12+i\g}}\Big|, \qquad
        r_{\chi, 2}(X, \g)=\Big|\sum_{p\leq X}\frac{w_X(p^2)\chi(p^2)}{p^{1+2i\g}}\Big|, \\
        &r_{\chi, 3}(X, \g)=\frac{1}{\log X} \int_{1/2}^{\infty} X^{\frac 1 2-\s}\Big|\sum_{p\leq X^2}\frac{\Lambda_X(p)\chi(p) \log(Xp)}{p^{\s+i\g}}\Big|\mathop{d\s},\quad \hbox{and}\\ \qquad
        &\qquad r_{\chi,4}(X, \g)=\Big(1+\log^+\frac{1}{\eta_{\chi}(\g)\log X}\Big)\frac{E_\chi(X, \g)}{\log X},
        \end{split}
        \ee
and
$$
        E_\chi(X, \g) =\Big|\sum_{n\leq X^2} \sdfrac{\Lambda_X(n)\chi(n)}{n^{\s_1+i\g}}\Big|+\log(m|\g|). 
$$
\end{lem}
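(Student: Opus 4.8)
The plan is to derive this explicit-formula approximation for $\log|L(\rho,\chi)|$ by mimicking the corresponding argument for $\log|\zeta(\rho+z)|$ in~\cite{C}, adapted to the Dirichlet $L$-function setting. First I would start from the standard representation of $\log L(s,\chi)$ in terms of a truncated Dirichlet series over prime powers with the smooth cutoff $w_X$. Concretely, one uses the formula (valid under GRH, going back to Selberg-type arguments; see~\cite{S}) expressing $\frac{L'}{L}(s,\chi)$ via a contour integral against $X^{w}/w^2$-type kernels, which upon integration yields
\[
\log L(s,\chi) = \sum_{n\le X^2}\frac{\Lambda_X(n)\chi(n)}{n^s\log n} + (\text{explicit error terms}),
\]
where the error terms involve a sum over the zeros $\rho_\chi$ weighted by the kernel and a contribution from the Gamma-factor/functional equation accounting for the $\log(m|\gamma|)$ term. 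The weight $w_X$ arises precisely because the Mellin transform of the kernel $\left(\frac{X^{2w}-X^w}{w^2\log X}\right)$ produces it.

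Next I would take real parts at $s=\rho=\frac12+i\gamma$. The main term $\sum_{n\le X^2}\frac{\Lambda_X(n)\chi(n)}{n^{1/2+i\gamma}\log n}$ splits into the prime contribution, which is $\mathcal{P}_\chi(\gamma)$ up to the weight discrepancy $r_{\chi,1}$ (replacing $w_X(p)$ by $1$ for $p\le X^2$) and the $\log n$ in the denominator (absorbed since $\log p/\log n$ is harmless for the real part after the standard manipulation, or folded into $r_{\chi,1}$), plus the prime-square contribution, which is $O(r_{\chi,2})$ after noting $\sum_{p\le X}\frac{1}{p}=O(\log\log X)$ — wait, more carefully, the $p^2$ term is exactly $\frac12\Re\sum_{p}\frac{w_X(p^2)\chi(p^2)}{p^{1+2i\gamma}}$, giving $O(r_{\chi,2})$, and higher prime powers $p^k$ with $k\ge3$ contribute $O(1)$ since $\sum_k\sum_p p^{-k/2}$ converges. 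The term $r_{\chi,3}$ comes from the derivative-of-the-kernel piece that appears when one differentiates $X^w$ under the integral sign (the $\log(Xp)$ factor and the $X^{1/2-\sigma}$ integrand are characteristic of this), and $r_{\chi,4}$ is the zero-sum contribution: bounding $\sum_{\rho_\chi}(\text{kernel})$ requires separating the zeros nearest to $\gamma$ — whose distance is $\eta_\chi(\gamma)>0$ by Hypothesis $\mathscr{D}$ — producing the $\log^+\frac{1}{\eta_\chi(\gamma)\log X}$ factor, with the remaining zeros controlled by a zero-counting estimate that introduces $E_\chi(X,\gamma)$ via the usual $N(t,\chi)$-type bounds relating zero counts back to $\frac{L'}{L}$.

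The main obstacle, I expect, is the careful treatment of the zero-sum term $r_{\chi,4}$: one must handle the possible near-coincidence of a zero $\rho_\chi$ of $L(s,\chi)$ with the zeta zero $\gamma$ at which we are sampling, and extract the sharp shape $\big(1+\log^+\frac{1}{\eta_\chi(\gamma)\log X}\big)\frac{E_\chi(X,\gamma)}{\log X}$ rather than something weaker. This is exactly the point where Hypothesis $\mathscr{D}$ is essential (to ensure $\eta_\chi(\gamma)\ne0$) and where Hypothesis $\mathscr{H}_{0,\chi}$ will later be invoked (in the proof of Theorem~\ref{distr of linear combination}, not here) to show this term is small for almost all $\gamma$. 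A secondary technical nuisance is keeping the $m$-dependence explicit and uniform in the Gamma-factor and conductor-discriminant contributions, so that the error $O\big(\frac{\log(m|\gamma|)}{\log X}\big)$ comes out cleanly; since $m$ is fixed this is routine but must be tracked. I would organize the write-up by first quoting the relevant lemma from~\cite{C} for $\zeta$, then indicating the three modifications needed — replacing $\zeta$ by $L(s,\chi)$, inserting $\chi(n)$ throughout, and replacing the zeta-zero spacing quantity by $\eta_\chi(\gamma)$ — and verifying that each step of the original argument survives these changes.
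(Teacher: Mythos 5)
Your proposal follows essentially the same route as the paper: start from the Selberg explicit formula for $\tfrac{L'}{L}(s,\chi)$ with the $\Lambda_X$-weighted kernel, integrate over $\sigma$ to recover $\log|L(\rho,\chi)|$, control the zero-sum contribution via $\eta_\chi(\gamma)$ (where Hypothesis $\mathscr{D}$ guarantees this is nonzero), and use the Selberg-1946 manipulation to pass from the $\Lambda_X(n)/(n^{\sigma_1+i\gamma}\log n)$ Dirichlet polynomial to $\Re\mathcal{P}_\chi(\gamma)$, generating $r_{\chi,1},r_{\chi,2},r_{\chi,3}$ as the weight-discrepancy, prime-square, and abscissa-shift errors respectively. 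The only imprecision worth flagging is your attribution of $r_{\chi,3}$ to ``differentiating $X^w$ under the integral sign'' — in the paper it arises from the Selberg p.\,35 argument shifting the abscissa from $\sigma_1$ to $\tfrac12$ (the paper's $J_1,J_2,J_3$ split of $-\int_{1/2}^\infty\tfrac{L'}{L}$), but this does not affect the correctness of your overall plan.
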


\begin{proof}
First recall that  $L(s, \chi)$ has a trivial zero at $s=-2n-\mathfrak{a}$ for each positive integer $n$ where  
$\mathfrak{a}=0$ if $\chi(-1)=1$, and $\mathfrak{a}=1$ if $\chi(-1)=-1$. 
By a standard argument (see, for example, Lemma 15 in \cite{SelbergLFncs}), if 
     $s\neq \r_\chi$ or $-2n-\mathfrak{a}$ for any positive integer $n$, then we have
        \be\label{eq:L' by L}
        \begin{split}
        \frac{L'}{L}(s, \chi)
        =-&\sum_{n\leq X^2}\frac{\Lambda_X(n)\chi(n)}{n^{s}}  
       +\frac{1}{\log X}\sum_{\r_\chi} \frac{X^{\r_\chi-s}(1-X^{\r_\chi-s})}{(\r_\chi-s)^2} \\
        &+\frac{1}{\log X}\sum_{n=1}^\infty \frac{X^{-2n-\mathfrak{a}-s}(1-X^{-2n-\mathfrak{a}-s})}{(2n+\mathfrak{a}+s)^2}.
        \end{split}
        \ee
        
For the moment, suppose that $\s\geq \s_1$. Then 
\be\notag%
 \frac{L'}{L}(\s+it, \chi)
        =-\sum_{n\leq X^2}\frac{\Lambda_X(n)\chi(n)}{n^{\s+it}}  
       +\frac{2\omega X^{\frac12-\s}}{\log X}\sum_{\r_\chi} \frac{1}{(\s_1 -\frac12)^2+ (t-\g_\chi)^2}
       + O\Big(\frac{X^{-2-\s}}{t \log X}\Big),
\ee
where $|\omega|<1$.
This sum over nontrivial zeros can be estimated in terms of a sum over prime powers. Indeed,  from (5.3) and (5.8) of~\cite{SelbergLFncs} we have
        \be\label{sum over gamma chi}
        \frac{1}{\log X}\sum_{\g_{\chi}}\frac{1}{(\s_1-\frac12)^2+(t-\g_{\chi})^2}
        \ll  \sum_{\g_{\chi}}\frac{\s_1-\frac12}{(\s_1-\frac12)^2+(t-\g_{\chi})^2}
       \ll E_{\chi}(X, t),
        \ee
where 
        \be\notag
        E_{\chi}(X, t)=\Big|\sum_{n\leq X^2}\sdfrac{\Lambda_X(n)\chi(n)}{n^{\s_1+it}}\Big|+\log\big(m(1+|t|)\big).
        \ee
From this and \eqref{eq:L' by L}, we thus obtain for $ \s\geq \s_1$,
        \be \label{eq: L' L}
         \frac{L'}{L}(s,\chi)
        =- \sum_{n\leq X^2}\frac{\Lambda_X(n)\chi(n)}{n^s} 
        +O\bigs(X^{\frac12-\s}E_{\chi}(X, t)\bigs) .
        \ee
Also, we note the following estimate which holds for all $s$ with $\s\geq \frac12$ other than $\r_\chi$ (see~\cite[p. 83]{Davenport}).
        \be\label{L' by L over zero sum}
        \frac{L'}{L}(s, \chi)=\sum_{\r_\chi}\Big(\frac{1}{s-\r_\chi}-\frac{1}{\r_\chi}\Big)+O\bigs(\log\big(m(1+|t|)\big)\bigs).
        \ee 
Next, we  write
        \be\label{integral L' over L}
        \begin{split}
        \log |L(\r,\chi)|
        &=-\Re\int_{1/2}^{\infty} \frac{L'}{L}(\s+i\g,\chi)\mathop{d\s}  \\
        &=-\Re\int_{\s_1}^{\infty} \frac{L'}{L}(\s+i\g,\chi)\mathop{d\s} 
        -\Big(\s_1-\frac12\Big)\Re \frac{L'}{L}(\s_1+i\g,\chi)  \\ 
        &\qquad \quad 
        +\Re\int_{1/2}^{\s_1} \Big( \frac{L'}{L}(\s_1+i\g,\chi)- \frac{L'}{L}(\s+i\g,\chi) \Big)\mathop{d\s}
         \\ &=J_1+J_2+J_3.
        \end{split}
        \ee
For $J_1$, we integrate \eqref{eq: L' L} over $[\s_1,\infty)$ and obtain        \[
        J_1
        =\Re \sum_{n\leq X^2} \frac{\Lambda_X(n)\chi(n)}{n^{\s_1+i\g}\log n}
        +O\Big(\frac{E_{\chi}(X, \g)}{\log X}\Big).
        \]
Also, substituting $\s=\s_1$ in \eqref{eq: L' L}, we see that 
        \[
        J_2
        \ll \frac{1}{\log X}\Big|\frac{L'}{L}(\s_1+i\g, \chi)\Big|
        \ll \frac{E_{\chi}(X, \g)}{\log X}.
        \]
Using these estimates in \eqref{integral L' over L}, we find that
       \[
       \log|L(\r,\chi)|
       = \Re \sum_{n\leq X^2} \frac{\Lambda_X(n)\chi(n)}{n^{\s_1+i\g}\log n}
       +\Re{J_3}
        +O\Big(\frac{E_{\chi}(X, \g)}{\log X}\Big).
       \]
Now consider $\Re{J_3}$. By \eqref{L' by L over zero sum}, 
        \be\notag
        \begin{split}
        \Re&\Big\{\frac{L'}{L}(\s_1+i\g, \chi)-\frac{L'}{L}(\s+i\g, \chi)\Big\}\\
        &=\sum_{\g_{\chi}} \frac{\s_1-\frac12}{\big(\s_1-\frac12\big)^2+(\g-\g_{\chi})^2}
        -\sum_{\g_{\chi}} \frac{\s-\frac12}{\big(\s-\frac12\big)^2+(\g-\g_{\chi})^2} 
        +O(\log(m|\g|)).
        \end{split}
        \ee        
Then, by a similar argument  to that in the proof of Lemma 2.1 in ~\cite{C}, one finds that
        \be\notag
        \begin{split}
        \Re \,J_3
         \ll
        \Big(1  + \log^{+}\Big(\frac{1}{\eta_{\chi}(\g) \log X}\Big)\Big)  \sum_{\g_\chi}   \frac{(\s_1-\frac12)^2}{(\s_1-\frac12)^2 +(\g-\g_\chi)^2 }
         +O\Big(\frac{\log(m|\g|)}{\log X}\Big).
        \end{split}
        \ee
 Also, by \eqref{sum over gamma chi}, the sum over   $\g_\chi$ here is $O(E_\chi(X, \g)/\log X)$. Thus by \eqref{integral L' over L}, we may conclude that
\be\label{log L form}
        \log|L(\r,\chi)|
        = \Re \sum_{n\leq X^2} \sdfrac{\Lambda_X(n)\chi(n)}{n^{\s_1+i\g}\log n} 
        +O\Big(\Big(1+\log^+\Big(\frac{1}{\eta_{\chi}(\g)\log X}\Big)\Big)\frac{E_\chi(X, \g)}{\log X}\Big).
\ee
Using the argument in \cite[p. 35]{S1946Archiv}, one can easily see that the real part of the Dirichlet polynomial on the right is equal to
        \begin{align*} 
         \Re \CMcal{P}_{\chi}(\g)
        &+O\Big(\Big|\sum_{p\leq X^2}\sdfrac{(1-w_X(p))\chi(p)}{p^{\frac12+i\g}}\Big|\Big)
        +O\Big(\Big|\sum_{p\leq X}\sdfrac{w_X(p^2)\chi(p^2)}{p^{1+2i\g}}\Big|\Big)\\
        &+O\bigg( \frac{1}{\log X} \int_{1/2}^\infty X^{\frac 1 2-\s}\Big|\sum_{p\leq X^2}\sdfrac{\Lambda_X(p)\chi(p)\log{(Xp)}}{p^{\s+i\g}}\Big|\mathop{d\s}\bigg)+O(1). 
        \end{align*}
The result follows from this and \eqref{log L form}.
   \end{proof}    

Now, let $\chi_1, \ldots, \chi_N$ be distinct primitive Dirichlet characters with  conductors $m_1, \ldots, m_N$, respectively. For 
fixed nonzero real numbers $a_1, \ldots, a_N$, we set
\be\notag
     \mathcal{L}(\rho) = a_1\log|L(\r, \chi_1)|+a_2\log|L(\r, \chi_2)|+ \cdots+ a_N\log|L(\r, \chi_N)|
\ee
and 
\be\notag
    \CMcal{P}_{\mathcal{L}}(\g)
    =a_1 \CMcal{P}_{\chi_1}(\g)+\cdots +a_N \CMcal{P}_{\chi_N}(\g)
    = \sum_{p\leq X^2} \frac{a_1 \chi_1(p)+ \cdots+a_N \chi_N(p)}{p^{\frac12+i\g}}.
\ee


\begin{lem}\label{our lemma 3.16 linear combination}
 Assume RH. If $2\leq X\leq T^{\frac{1}{16k}}$ with $k$ a positive integer, then  
    \[
    \sum_{0 < \g \leq T}\big|\Re{\CMcal{P}_{\mathcal{L}}(\g)}\big|^k= O\bigs((Dk\Psi(X^2))^{\frac k2}N(T)\bigs), 
    \]
    where $\Psi(X^2)$ is as in \eqref{Psi}, and $D$ depends on the $a_1, \ldots, a_N$ and $N$.
\end{lem}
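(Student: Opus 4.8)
The plan is to reduce the bound on the $k$-th moment of $\Re\CMcal{P}_{\mathcal L}(\g)$ over zeros of $\zeta$ to a mean-value estimate for a Dirichlet polynomial, which in turn is controlled by an explicit-formula-type sum (a Landau–Gonek / Montgomery–Vaughan style input) that is already a standard tool in this circle of ideas (and is the analogue of the corresponding lemma for $\zeta$ in \cite{C}). Write $b(p) = a_1\chi_1(p) + \cdots + a_N\chi_N(p)$, so that $\CMcal{P}_{\mathcal L}(\g) = \sum_{p\le X^2} b(p) p^{-1/2-i\g}$ and $|b(p)| \le \sum_j |a_j| =: D_0$, a constant depending only on the $a_j$ and $N$. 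Since $\Re\CMcal{P}_{\mathcal L}(\g) = \tfrac12(\CMcal{P}_{\mathcal L}(\g) + \overline{\CMcal{P}_{\mathcal L}(\g)})$, expanding $\big(\Re\CMcal{P}_{\mathcal L}(\g)\big)^k$ by the multinomial theorem gives a sum of at most $2^k$ terms, each of which is (up to the factor $2^{-k}$) a product of $k$ factors each of the form $\CMcal{P}_{\mathcal L}(\g)$ or its conjugate. Thus it suffices to bound, for each choice of $0 \le \ell \le k$, the sum $\sum_{0 < \g \le T} \CMcal{P}_{\mathcal L}(\g)^{\ell}\,\overline{\CMcal{P}_{\mathcal L}(\g)}^{k-\ell}$.

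\textbf{Step 1: Expand into a Dirichlet polynomial.} Multiplying out, $\CMcal{P}_{\mathcal L}(\g)^{\ell}\,\overline{\CMcal{P}_{\mathcal L}(\g)}^{k-\ell}$ is a linear combination of terms $n^{-1/2}\, n'^{-1/2}\, (n/n')^{-i\g}$ where $n = p_1\cdots p_\ell$ and $n' = p'_1\cdots p'_{k-\ell}$ are products of primes all $\le X^2$ (so $n, n' \le X^{2k} \le T^{1/8}$), with coefficients that are products of the $b$-values and hence bounded in absolute value by $D_0^k$. We then sum over $0 < \g \le T$. When $n = n'$ the inner sum contributes $N(T)$ times the coefficient; since the number of ways to write a given integer as an ordered product of $\ell$ (resp.\ $k-\ell$) primes is controlled by divisor-type bounds, the diagonal contribution is $O\big((D k\, \Psi(X^2))^{k/2} N(T)\big)$ after the usual combinatorial bookkeeping — this is exactly where the $\Psi(X^2) = \sum_{p\le X^2} p^{-1}$ and the factor $(Dk)^{k/2}$ come from, via the standard moment estimate $\sum_{0<\g\le T} |\sum_{p} b(p)p^{-1/2-i\g}|^{2r} \ll (Cr\,\Psi(X^2))^r N(T)$ for $r = \lfloor k/2\rfloor$ and its odd-$k$ variant obtained by Cauchy–Schwarz.

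\textbf{Step 2: Dispose of the off-diagonal.} For $n \ne n'$ we invoke the relevant counting estimate for $\sum_{0<\g\le T}(n/n')^{i\g}$ — i.e.\ the zero-sum formula that plays the role of Lemma 2.2 (or its equivalent) in \cite{C} — which gives a bound of size $O(\min(T, 1/|\log(n/n')|)\log(nn'T))$ or similar; since $n, n' \le T^{1/8}$, summing this against the $n^{-1/2}n'^{-1/2}$ weights and the $O(D_0^k)$ coefficients yields a total that is $o(N(T))$, in fact negligible against the main term, uniformly in the $2^k \cdot (k+1)$ choices of sign/conjugation pattern because $k$ is fixed relative to $T$ (the hypothesis $X \le T^{1/(16k)}$ is precisely what makes the lengths short enough for this). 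Collecting Steps 1 and 2 over all $\le 2^k$ multinomial terms and absorbing the $2^k$ into the constant $D$ (which is allowed to depend on $N$ and the $a_j$, and here also on $k$ through $(Dk)^{k/2}$) gives the claimed bound.

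\textbf{Main obstacle.} The routine-but-delicate part is the bookkeeping in Step 1: getting the combinatorial constant to come out as $(Dk\,\Psi(X^2))^{k/2}$ rather than something worse (e.g.\ with a $k!$ instead of $k^{k/2}$) requires handling the multiplicities with which $n$ arises as an ordered product of primes and using the bound $\sum_{p\le X^2}|b(p)|^2/p \le D_0^2\,\Psi(X^2)$ together with the hypercontractive-type counting $\sum_{p_1,\dots,p_r \le X^2,\ p_1\cdots p_r = p'_1\cdots p'_r} \prod p_i^{-1} \le r!\,\Psi(X^2)^r$; the Gaussian moment growth $(2r)!/(2^r r!) \asymp (r/e)^r$ absorbed into $(Dk)^{k/2}$ is what ultimately forces the restriction $k \le$ (something like) $\log T/(16\log X)$. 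Apart from that, everything follows the template already established for $\zeta$ in \cite{C}, so we will be content to indicate the changes — principally, replacing $\Lambda$-coefficients of $\zeta$ by the character-twisted coefficients $b(p)$ — and refer to the arguments there.
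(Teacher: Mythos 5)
Your overall route—expand $(\Re\CMcal{P}_{\mathcal L}(\gamma))^k$ directly into a Dirichlet polynomial with coefficients $b(p)=a_1\chi_1(p)+\cdots+a_N\chi_N(p)$ and split into diagonal and off-diagonal via a Landau--Gonek type zero sum—is workable and is essentially the machinery this paper uses later (Lemma \ref{lemma 3.4 linear combination}), but Step 2 as written has a genuine gap. The sum $\sum_{0<\gamma\le T}(n/n')^{i\gamma}$ with $n\ne n'$ is \emph{not} bounded by $O\big(\min(T,1/|\log(n/n')|)\log(nn'T)\big)$: under RH the explicit formula carries a main term of size $\frac{T}{2\pi}\Lambda(n'/n)(n'/n)^{-1/2}$ whenever $n'/n$ (or $n/n'$) is a prime power, and such ratios occur constantly among your off-diagonal pairs (take $n'=nq$). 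Their total contribution is of order $T\sum_{q\le X^2}\sum_{\ell\ge 1}(\log q)q^{-\ell/2}$ times a $(k-1)$-st moment of the random model, i.e.\ roughly $T\log X\,(Dk\Psi(X^2))^{(k-1)/2}$, which for $k\ge 2$ is \emph{not} $o(N(T))$; it can exceed $N(T)$ by powers of $k\Psi(X^2)$. What is true, and what you actually need, is that this secondary term is smaller than the diagonal by a factor of about $\sqrt{k\Psi(X^2)}$, and showing that requires the same combinatorial bookkeeping you describe for the diagonal (multiplicity bounds like $b_{(k+1)/2}\le\big(\frac{k+1}{2}\big)!$, the bound $|b(p)|\le\sum_j|a_j|$, and $\sum_{q\le X^2}\log q/q\ll\log X\ll\log T/k$ from the hypothesis $X\le T^{1/16k}$)—this is precisely the estimate carried out for $T_1^{\pm}(q)$ in the proof of Lemma \ref{fourier linear combination}. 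So the conclusion is salvageable, but the off-diagonal step as stated does not establish the lemma, because the quantity you dismiss as error contains a genuine (if relatively negligible) main term.

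Two further remarks. The phrase ``because $k$ is fixed relative to $T$'' is not available: the lemma is applied with $k$ as large as $K\asymp(\log\log T)^6$, so all factors $2^k$, $(\sum_j|a_j|)^k$, $k!$ must be tracked uniformly in $k$ (your bounds mostly do this, but the justification should not appeal to fixed $k$). Finally, note that the paper's own proof avoids your whole computation: it quotes the single-character estimate $\sum_{0<\gamma\le T}|\Re\CMcal{P}_{\chi_j}(\gamma)|^k\ll (ck\Psi(X^2))^{k/2}N(T)$ from Proposition 4.1 of \cite{C} and then combines the $N$ characters using the convexity inequality $\big(\tfrac1n\sum_{i}x_i\big)^k\le\tfrac1n\sum_i x_i^k$, so no cross terms between distinct characters and no new off-diagonal analysis are ever needed; your approach re-derives the Proposition 4.1 input for the combined polynomial, which is fine but costs you exactly the delicate step you glossed over.
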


\begin{proof}
By an argument   along the lines of Proposition 4.1 in~\cite{C} and Stirling's formula, one can show that
    \[
    \sum_{0 < \g \leq T}\big|\Re{\CMcal{P}_{\chi_j}(\g)}\big|^k= O\bigs((ck\Psi(X^2))^{\frac k2}N(T)\bigs) 
    \]
for $j=1, 2, \ldots, N$ with $c$ an absolute constant. The result then follows easily upon using the inequality   $
         \Big(\frac 1n \sum_{i=1}^n x_i \Big)^k \leq \frac{1}{n} \sum_{i=1}^n x_i^k,
         $ 
     which is valid for $x_1, x_2, \dots, x_n>0$.
\end{proof}


The next result  is Lemma 3.3 of ~\cite{C}.

\begin{lem}\label{Dir poly mmt}
Assume RH. Let $k$ be a positive integer and suppose that $1<Y\leq (T/\log T)^{\frac{1}{3k}}$.
 For any complex-valued sequence $\{a_p\}_p$ indexed by the primes, we have
\be\notag
\sum_{0<\g\leq T} \bigg| \sum_{p\leq Y} \frac{a_p}{p^{\frac12+i\g}}\bigg| 
\ll k! N(T)   \Big(\sum_{p\leq Y} \frac{|a_p|^2}{p} \Big)^{k}.
\ee
\end{lem}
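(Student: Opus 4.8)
The plan is to collapse the $2k$-th power into the square of a single Dirichlet polynomial, reduce to a Montgomery--Vaughan type mean value over the zeros of $\zeta$, and control the off-diagonal contribution by Landau's explicit formula. First I would expand the $k$-th power:
\[
\Big(\sum_{p\leq Y}\frac{a_p}{p^{s}}\Big)^{k}=\sum_{n\leq Y^{k}}\frac{b(n)}{n^{s}},\qquad b(n)=\sum_{p_1\cdots p_k=n}a_{p_1}\cdots a_{p_k},
\]
the inner sum running over ordered $k$-tuples of primes $\leq Y$, so $b(n)$ is supported on integers $n\leq Y^{k}$ with exactly $k$ prime factors counted with multiplicity. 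If $n=q_1^{e_1}\cdots q_r^{e_r}$ then $b(n)=\binom{k}{e_1,\dots,e_r}\prod_i a_{q_i}^{e_i}$, and since $\binom{k}{e_1,\dots,e_r}\leq k!$ while $\big(\sum_{p\leq Y}|a_p|^2/p\big)^{k}=\sum_{n}\binom{k}{e_1,\dots,e_r}\prod_i|a_{q_i}|^{2e_i}/n$, summing gives
\[
\sum_{n\leq Y^{k}}\frac{|b(n)|^{2}}{n}\ \leq\ k!\Big(\sum_{p\leq Y}\frac{|a_p|^{2}}{p}\Big)^{k}.
\]
Since $\big|\sum_{p\leq Y}a_p p^{-1/2-i\g}\big|^{2k}=\big|\sum_{n\leq Y^{k}}b(n)\,n^{-1/2-i\g}\big|^{2}$, and $1<Y\leq(T/\log T)^{1/3k}$ forces $Y^{k}\leq(T/\log T)^{1/3}$, the Lemma reduces to showing that, under RH, for any complex sequence $\{c_n\}$ and any $N\leq(T/\log T)^{1/3}$,
\[
\sum_{0<\g\leq T}\Big|\sum_{n\leq N}\frac{c_n}{n^{1/2+i\g}}\Big|^{2}\ \ll\ N(T)\sum_{n\leq N}\frac{|c_n|^{2}}{n};
\]
one then takes $c_n=b(n)$, $N=Y^{k}$.

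To prove this I would open the square, getting $\sum_{m,n\leq N}\frac{c_n\overline{c_m}}{\sqrt{mn}}\sum_{0<\g\leq T}(m/n)^{i\g}$. The diagonal $m=n$ contributes exactly $N(T)\sum_{n\leq N}|c_n|^{2}/n$. For $m\neq n$ I would invoke Landau's formula in a form uniform in $m/n$ (as in Gonek's work): under RH, for positive integers $m>n$,
\[
\sum_{0<\g\leq T}\Big(\frac{m}{n}\Big)^{i\g}=-\frac{T}{2\pi}\,\frac{\Lambda(m/n)}{\sqrt{m/n}}+O\big(mn\log(2mnT)\big),
\]
the case $m<n$ following by conjugation, where $\Lambda(m/n)=\log p$ if $m/n=p^{j}$ (which forces $n\mid m$) and $0$ otherwise, and the $O$-term bounds the various error terms of the formula in this range. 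Using, for $m=np^{j}$, the identity $\frac{\overline{c_m}c_n}{\sqrt{mn}}\cdot\frac{\log p}{p^{j/2}}=\frac{\log p}{m}\,\overline{c_m}c_n$, together with the elementary estimates $\sum_{p^{j}\leq X}\log p/p^{j}\ll\log X$, $\sum_{p^{j}\mid r,\,j\geq1}\log p=\log r$ and $\log N\leq\frac13\log T$, the main term contributes $\ll T\log N\sum_{n}|c_n|^{2}/n\ll N(T)\sum_{n}|c_n|^{2}/n$; by Cauchy--Schwarz the error term contributes $\ll\log(2N^{2}T)\big(\sum_{n\leq N}|c_n|\sqrt n\big)^{2}\ll N^{3}\log(2N^{2}T)\sum_{n}|c_n|^{2}/n$, and this is $\ll T\log T\sum_{n}|c_n|^{2}/n\asymp N(T)\sum_{n}|c_n|^{2}/n$ precisely because $N^{3}\leq T/\log T$. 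Adding the diagonal and off-diagonal contributions yields the displayed estimate, hence the Lemma. (If preferred, this second half may instead be replaced by quoting a ready-made mean value bound for Dirichlet polynomials summed over the zeros of $\zeta$.)

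The combinatorial identity and its bookkeeping are routine; the real work lies in the mean value over the zeros, and in particular in having Landau's formula with an error term that is uniform in $m/n$ — including the delicate regime where $m/n$ lies very close to $1$ — and small enough to survive being weighted by $|c_nc_m|/\sqrt{mn}$ and summed over all off-diagonal pairs with $m,n\leq N$. It is exactly this summation that imposes the hypothesis $Y\leq(T/\log T)^{1/3k}$.
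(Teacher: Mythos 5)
Your proof is correct, provided the left-hand side is read as the $2k$-th moment $\sum_{0<\g\leq T}\big|\sum_{p\leq Y}a_p\,p^{-1/2-i\g}\big|^{2k}$: the exponent $2k$ has evidently been dropped in the displayed statement (as printed the inequality is not even homogeneous in $\{a_p\}$), and the $2k$-th moment is exactly what your argument establishes. The paper itself gives no proof, simply quoting Lemma 3.3 of \cite{C}, and your route — expand the $k$-th power into $\sum_{n\leq Y^{k}}b(n)n^{-s}$, bound $\sum_n |b(n)|^2/n\leq k!\big(\sum_{p\leq Y}|a_p|^2/p\big)^{k}$ via the multinomial coefficients, and then bound the second moment of the resulting Dirichlet polynomial over the zeros using the diagonal term plus the uniform Landau--Gonek explicit formula (where the restriction $Y^{3k}\leq T/\log T$ is used) — is precisely the argument underlying that cited lemma, so this is essentially the same approach.
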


Next we introduce   random polynomials that in certain respects model the behavior of the Dirichlet polynomials $\CMcal{P}_{\mathcal{\chi}}(\g)$ and $\CMcal{P}_{\mathcal{L}}(\g)$. Let $\{\theta_p \}$ be a sequence indexed by the primes of identically distributed independent random variables, each of which is uniformly distributed over $[0, 1]$. The random polynomial modeling $\CMcal{P}_{\mathcal{\chi}}(\g)$ is
    \[
    \CMcal{P}_{\chi}(\underline{\theta})= \sum_{p\leq X^2}\frac{\chi(p)e^{2\pi i \theta_{p}}}{\sqrt p}.
    \]
 Here $\Re\CMcal{P}_{\chi}(\underline{\theta})$ has mean $0$ and variance 
  \[
   \int_0^1 \big(\Re\CMcal{P}_{\chi_i}(\underline{\theta})\bigs)^2 
    \mathop{d\underline{\theta}}
   = \tfrac12 \sum_{p\leq X^2}|\chi_i(p)|^2p^{-1}
    = \tfrac{1}{2}\Psi(X^2)+O_{m_i}(1).
  \]
Now suppose that an  integer $n>1$ has the unique factorization 
$\displaystyle n=p_1^{\mu_1}\dots p_r^{\mu_r}$ and write
    \[
    \theta_n=\mu_1\theta_{p_1}+\dots+\mu_r\theta_{p_r}.
    \]    
It is easy to see that
$\theta_{mn}=\theta_m+\theta_n $ for any  $\mkern9mu m, n \in \mathbb{Z}^+ $ and that  $\theta_m=\theta_n$ if and only if $ m=n$. Thus
    \be\label{orthogonality}
    \int_0^1 e^{2\pi i\theta_m} e^{-2\pi i\theta_n} \mathop{d\underline{\theta}} = 
    \begin{cases}
    1 &\text{if} \quad m=n, \\
    0 &\text{if} \quad m\neq n,
    \end{cases}
    \ee
where here and from now on, $\int_0^1 (\dots) \mathop{d\underline{\theta}}$ denotes a multidimensional integral.  
The random polynomial we use to model $\CMcal{P}_{\mathcal{L}}(\g)$ is
\be\label{P_L theta}\begin{split}
\CMcal{P}_{\mathcal{L}}(\underline{\theta}) 
= a_1\CMcal{P}_{\chi_1}(\underline{\theta})
+\cdots +a_N\CMcal{P}_{\chi_N}(\underline{\theta}) 
= \sum_{p\leq X^2}\frac{(a_1\chi_1(p)+
\cdots +a_N \chi_N(p))e^{2\pi i \theta_{p}}}{\sqrt p}.
\end{split}
\ee
Clearly the mean of $\Re \CMcal{P}_{\mathcal{L}}(\theta)$ is
    \[
    \begin{split}
    \int_0^1 \Re\CMcal{P}_{\mathcal{L}}(\underline{\theta}) \mathop{d\underline{\theta}} 
    =a_1  \int_0^1 \Big(\Re \sum_{p\leq X^2} \frac{\chi_1(p)e^{2\pi i\theta_p}}{\sqrt p}\Big) \mathop{d\underline{\theta}}
    + \cdots +a_N \int_0^1 \Big(\Re \sum_{p\leq X^2} \frac{\chi_N(p)e^{2\pi i\theta_p}}{\sqrt p}\Big) \mathop{d\underline{\theta}}
    =0.
    \end{split}
    \]
Its variance is
    \be\label{compute variance linear combination}
    \begin{split}
    \int_0^1\big(\Re\CMcal{P}_{\mathcal{L}}(\underline{\theta})\big)^2 \mathop{d\underline{\theta}} 
    =& \sum_{i=1}^N {a_i}^2 
    \int_0^1 \big(\Re\CMcal{P}_{\chi_i}(\underline{\theta})\bigs)^2 
    \mathop{d\underline{\theta}}\\
    &+ \sum_{1\leq i\neq j \leq N}a_i a_j \int_0^1 \Re\CMcal{P}_{\chi_i}(\underline{\theta}) 
    \Re\CMcal{P}_{\chi_j}(\underline{\theta})  \mathop{d\underline{\theta}}. 
    \end{split}
    \ee
As we saw above,
$
   \int_0^1 (\Re\CMcal{P}_{\chi_i}(\underline{\theta}))^2 
    \mathop{d\underline{\theta}}
    = \tfrac{1}{2}{\Psi(X^2)}+O_{m_i}(1).$
    Thus, the first term on the right-hand side of \eqref{compute variance linear combination} equals
$$
\tfrac12 \Psi(X^2)  \sum_{i=1}^N a_i^2 +O(1),
$$
where the $O$-term constant depends on the $a_i$ and $m_i$.
For $i\neq j$, the other integral on the right-hand side of \eqref{compute variance linear combination} equals
    \begin{align*}
    \int_0^1\bigs(\Re\CMcal{P}_{\chi_i}(\underline{\theta}) \bigs)
    \bigs( \Re\CMcal{P}_{\chi_j}(\underline{\theta})\bigs)    \mathop{d\underline{\theta}}  
    &= \frac14 \int_0^1 \bigs(\CMcal{P}_{\chi_i}(\underline{\theta}) +
    \overline{\CMcal{P}_{\chi_i}(\underline{\theta})}\bigs)
     \bigs(\CMcal{P}_{\chi_j}(\underline{\theta}) +
    \overline{\CMcal{P}_{\chi_j}(\underline{\theta})}\bigs) 
    \mathop{d\underline{\theta}} \\
    &=  \frac12 \int_0^1 \Re\bigs(\CMcal{P}_{\chi_i}(\underline{\theta}) \CMcal{P}_{\chi_j}(\underline{\theta})\bigs)
     \mathop{d\underline{\theta}}
    +  \frac12 \int_0^1 \Re\bigs(\CMcal{P}_{\chi_i}(\underline{\theta})\overline{\CMcal{P}_{\chi_j}(\underline{\theta})} \, \bigs) \mathop{d\underline{\theta}}. 
    \end{align*}
By \eqref{orthogonality} we have
    \[
    \Re\int_0^1\CMcal{P}_{\chi_i}(\underline{\theta}) \CMcal{P}_{\chi_j}(\underline{\theta})\mathop{d\underline{\theta}}
      =0
    \]
and
    \[
    \Re \int_0^1\CMcal{P}_{\chi_i}(\underline{\theta})\overline{\CMcal{P}_{\chi_j}(\underline{\theta})}  \mathop{d\underline{\theta}}
    = \Re\sum_{p\leq X^2} \frac{\chi_i(p) \overline{\chi}_j(p)}{p}.
    \]
Here $\chi_i\overline{\chi_j}$ is a nonprincipal Dirichlet character modulo $M_{i,j}=\operatorname{lcm}[m_i, m_j]$, so we have
     \[
     \sum_{p\leq X^2} \frac{\chi_i(p)\overline{\chi_j}(p)}{p}
     \ll_{M_{i, j}}1. 
     \]
Thus, the second term in \eqref{compute variance linear combination}  is 
$$
\ll \sum_{1\leq i\neq j \leq N}|a_i a_j| \ll N \sum_{i=1}^N a_i^2,  
$$
where the implied constant depends on $m_1, \ldots,m_N$. 
 By \eqref{compute variance linear combination}, we now see that the variance of 
 $\Re\CMcal{P}_{\mathcal{L}}(\underline{\theta})$ is
    \begin{equation}\notag
      \int_0^1(\Re\CMcal{P}_{\mathcal{L}}(\underline{\theta}))^2 \mathop{d\underline{\theta}}
    =\tfrac{1}{2}\Psi(X^2)(a_1^2+{a_2}^2+\cdots+ a_N^2 )+O(1),
    \end{equation}
    with the $O$-term constant depending on the $a_i, m_i$, and $N$.
Notice that for  $X$ a small power of $T$, this is $\sim\frac{{a_1}^2+ \cdots+{a_N}^2}{2} \log\log T$, which is the 
variance  of the approximate normal distribution in the statement of Theorem \ref{distr of linear combination}. 

As an analogue of Lemma \ref{our lemma 3.16 linear combination} for $\Re\CMcal{P}_{\mathcal{L}}(\underline{\theta})$ we have

\begin{lem}\label{Tsang lemma 3.4 linear combination}
    Suppose that $k$ is a nonnegative integer. For $\displaystyle 2\leq X \leq T^{\frac{1}{2k}}$ we have
    \be\notag
    \int_0^1|\Re{\CMcal{P}_{\mathcal{L}}(\underline{\theta})}|^k\mathop{d{\underline{\theta}}}
    = O ((Dk\Psi(X^2))^{\frac k2} ).
    \ee
    Here $D$ depends at most on the $a_i, m_i,$ and $N$.
\end{lem}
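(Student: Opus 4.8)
The plan is to carry out the standard even-moment computation (as in~\cite{C}), which in the random model can be done essentially exactly thanks to the orthogonality relation~\eqref{orthogonality}. Write $c_p=a_1\chi_1(p)+\dots+a_N\chi_N(p)$, so that $\CMcal{P}_{\mathcal{L}}(\underline{\theta})=\sum_{p\le X^2}c_p\,p^{-1/2}e^{2\pi i\theta_p}$. For an even exponent $k=2\ell$ I would first expand
\[
\int_0^1\big(\Re\CMcal{P}_{\mathcal{L}}(\underline{\theta})\big)^{2\ell}\,d\underline{\theta}
=\frac{1}{2^{2\ell}}\sum_{r=0}^{2\ell}\binom{2\ell}{r}\int_0^1\CMcal{P}_{\mathcal{L}}(\underline{\theta})^{\,r}\,\overline{\CMcal{P}_{\mathcal{L}}(\underline{\theta})}^{\,2\ell-r}\,d\underline{\theta},
\]
expand each factor into its Dirichlet series, and use the identity $\theta_{mn}=\theta_m+\theta_n$ together with~\eqref{orthogonality}. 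A term survives only when a product of $r$ primes $\le X^2$ equals a product of $2\ell-r$ such primes, which by unique factorization forces $r=\ell$; hence the left-hand side reduces to $2^{-2\ell}\binom{2\ell}{\ell}\int_0^1\big|\CMcal{P}_{\mathcal{L}}(\underline{\theta})^{\ell}\big|^2\,d\underline{\theta}$.

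Next I would evaluate this last integral. Writing $\CMcal{P}_{\mathcal{L}}(\underline{\theta})^{\ell}=\sum_n g_\ell(n)\,n^{-1/2}e^{2\pi i\theta_n}$, where $g_\ell(n)=\sum_{p_1\cdots p_\ell=n}c_{p_1}\cdots c_{p_\ell}$ runs over ordered $\ell$-tuples of primes $\le X^2$, orthogonality gives $\int_0^1|\CMcal{P}_{\mathcal{L}}(\underline{\theta})^{\ell}|^2\,d\underline{\theta}=\sum_n|g_\ell(n)|^2/n$. For $n=\prod_p p^{\nu_p}$ with $\sum_p\nu_p=\ell$ one has $g_\ell(n)=(\ell!/\prod_p\nu_p!)\prod_p c_p^{\nu_p}$, so using $(\nu_p!)^{-2}\le(\nu_p!)^{-1}$ and the multinomial theorem,
\[
\sum_n\frac{|g_\ell(n)|^2}{n}=(\ell!)^2\sum_{\sum_p\nu_p=\ell}\prod_p\frac{1}{(\nu_p!)^2}\Big(\frac{|c_p|^2}{p}\Big)^{\nu_p}\le \ell!\,V^{\ell},\qquad V:=\sum_{p\le X^2}\frac{|c_p|^2}{p}.
\]
Since $|c_p|^2\le N\sum_i a_i^2$ (indeed, by the variance computation preceding the lemma, $V=\Psi(X^2)\sum_i a_i^2+O(1)$ with the $O$-constant depending on the $a_i$, $m_i$, and $N$), and since $\Psi(X^2)$ is bounded below by a positive absolute constant for $X\ge2$, we get $V\le D\,\Psi(X^2)$ with $D$ of the required form. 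Combining this with the elementary inequality $2^{-2\ell}\binom{2\ell}{\ell}\ell!=2^{-2\ell}(2\ell)!/\ell!\le\ell^{\ell}$ yields $\int_0^1|\Re\CMcal{P}_{\mathcal{L}}(\underline{\theta})|^{2\ell}\,d\underline{\theta}\le(\ell V)^{\ell}\le(Dk\Psi(X^2))^{k/2}$, which is the asserted bound for even $k$.

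For odd $k=2\ell+1$ I would interpolate via Cauchy--Schwarz,
\[
\int_0^1|\Re\CMcal{P}_{\mathcal{L}}(\underline{\theta})|^{2\ell+1}\,d\underline{\theta}\le\Big(\int_0^1|\Re\CMcal{P}_{\mathcal{L}}(\underline{\theta})|^{2\ell}\,d\underline{\theta}\Big)^{1/2}\Big(\int_0^1|\Re\CMcal{P}_{\mathcal{L}}(\underline{\theta})|^{2\ell+2}\,d\underline{\theta}\Big)^{1/2},
\]
and insert the even-moment estimate with $\ell$ and $\ell+1$ in the roles of $k/2$; since $\ell+1\le2\ell+1=k$, this gives a bound of the shape $(\ell+1)^{k/2}V^{k/2}\le(Dk\Psi(X^2))^{k/2}$ after adjusting $D$, while $k=0$ is trivial. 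Because the integral over $\underline{\theta}$ does not involve $T$, the hypothesis $X\le T^{1/(2k)}$ plays no real role and is retained only for parallelism with Lemma~\ref{our lemma 3.16 linear combination}. There is no genuine obstacle here: once~\eqref{orthogonality} is in hand the argument is elementary, and the only points needing care are the combinatorial bookkeeping of the coefficients $g_\ell(n)$ (keeping track of the multinomial factors and the $(\nu_p!)^2$ in the denominator) and making the dependence of $D$ on $a_1,\dots,a_N$, $m_1,\dots,m_N$, and $N$ explicit, which is already furnished by the estimate for $V$.
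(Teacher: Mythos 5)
Your proof is correct. It also takes a slightly different route from the paper: the paper first proves the single-character bound $\int_0^1|\Re{\CMcal{P}_{\chi_j}(\underline{\theta})}|^k\,d\underline{\theta} = O((ck\Psi(X^2))^{k/2})$ by citing Tsang's computation, and then deduces the linear-combination bound using the power-mean inequality $\big(\frac{1}{n}\sum x_i\big)^k\leq\frac{1}{n}\sum x_i^k$ (the same step invoked in Lemma~\ref{our lemma 3.16 linear combination}). You instead expand the full polynomial $\CMcal{P}_{\mathcal{L}}$ directly, reduce to the diagonal term $r=\ell$ via unique factorization and~\eqref{orthogonality}, and run the multinomial bookkeeping on the coefficients $c_p=a_1\chi_1(p)+\cdots+a_N\chi_N(p)$ with the estimate $V=\sum_{p\le X^2}|c_p|^2/p\ll\Psi(X^2)$. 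This buys you a self-contained argument with explicit constants (in particular no appeal to Tsang's unpublished (3.10)), at the cost of a longer calculation; the paper's reduction is shorter but delegates the core computation. The combinatorial steps — the multinomial count for $g_\ell(n)$, the inequality $(\nu_p!)^{-2}\le(\nu_p!)^{-1}$, the bound $2^{-2\ell}(2\ell)!/\ell!\le\ell^\ell$, and the Cauchy--Schwarz interpolation for odd $k$ — all check out, and your observation that the hypothesis $X\le T^{1/(2k)}$ is inert here is also correct.
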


\begin{proof}
As in (3.10) of \cite{Tsang}, one can use \eqref{orthogonality} and easily prove that 
    \be\notag
    \int_0^1|\Re{\CMcal{P}_{\chi_j}(\underline{\theta})}|^k\mathop{d{\underline{\theta}}}
    = O((ck\Psi(X^2))^{\frac{k}{2}}) \quad \text{for} \quad j=1, 2, \ldots, N. 
    \ee
The claim of the lemma is then straightforward.    
\end{proof}

Next we express the $k^{th}$ moment of $\Re\CMcal{P}_{\mathcal{L}}(\g)$ in terms of integrals of the $k^{th}$ moment of $\Re{\CMcal{P}_{\mathcal{L}}(\underline{\theta})}$. 
  
\begin{lem}\label{lemma 3.4 linear combination}
Assume RH. 
    Let $k$ be a positive integer with $k \ll \sqrt{\log T},$ and suppose that  $\displaystyle X\leq T^{\frac{1}{16k}}$. Then
     \be\label{eq:lemma 3.4 linear combination}
    \begin{split}
    \sum_{0 < \g  \leq T} \bigs(\Re\CMcal{P}_{\mathcal{L}}(\g)\bigs)^k 
    = &N(T)\int_0^1  (\Re\CMcal{P}_{\mathcal{L}}(\underline{\theta}))^k\mathop{d{\underline{\theta}}}  \\
    &-\frac{T}{\pi}\sum_{q\leq X^2} \sum_{1\leq \ell\leq k} \frac{\log q}{q^{\frac{\ell}{2}}}
    \int_0^1 (\Re\CMcal{P}_{\mathcal{L}}(\underline{\theta}))^k 
    \Re(e^{2\pi i\ell\theta_q}) \mathop{d{\underline{\theta}}} \\
    &\quad +O ( (Dk)^k\sqrt T\log^3{T}).
    \end{split}
    \ee
    Here the constant $D$ in the $O$-term depends on $a_1, \ldots, a_N$ and $N.$
\end{lem}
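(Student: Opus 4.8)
The plan is to expand the $k$-th power of $\Re\CMcal{P}_{\mathcal{L}}(\g)$ into a Dirichlet polynomial in the variable $i\g$ and evaluate the sum over zeros term by term using an explicit-formula / Landau-type argument, then recognize the main term as the integral against $d\underline\theta$. First I would write
\[
\bigs(\Re\CMcal{P}_{\mathcal{L}}(\g)\bigs)^k
= \Big(\tfrac12\sum_{p\leq X^2}\frac{b(p)}{p^{1/2+i\g}}+\tfrac12\sum_{p\leq X^2}\frac{\overline{b(p)}}{p^{1/2-i\g}}\Big)^k,
\]
where $b(p)=a_1\chi_1(p)+\cdots+a_N\chi_N(p)$, and multiply out. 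Each monomial is a product of $k$ prime-power factors, so it has the shape $\frac{\beta(h,n)}{(hn)^{1/2}}\,(h/n)^{-i\g}$ for integers $h,n$ supported on primes $p\leq X^2$ with $\Omega(h)+\Omega(n)=k$, hence $hn\le X^{2k}$. Thus $\bigs(\Re\CMcal{P}_{\mathcal{L}}(\g)\bigs)^k=\sum_{h,n}\frac{\beta(h,n)}{\sqrt{hn}}(n/h)^{i\g}$, a Dirichlet polynomial of length $\ll X^{2k}\le T^{1/8}$.

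The next step is to sum $(n/h)^{i\g}$ over $0<\g\le T$. I would use the Landau–Gonek explicit formula (or Fujii's refinement) for $\sum_{0<\g\le T} x^{i\g}$: the diagonal terms $h=n$ contribute $N(T)$ times the coefficient, which is exactly $N(T)\int_0^1(\Re\CMcal{P}_{\mathcal{L}}(\underline\theta))^k\,d\underline\theta$ by the orthogonality relation \eqref{orthogonality} (the $\theta_m=\theta_n\iff m=n$ property matches the $h=n$ diagonal precisely). The off-diagonal terms $h\ne n$ with $n/h$ a ratio of prime powers produce, via the explicit formula, a main term of size $-\frac{T}{2\pi}\Lambda(n/h)/\sqrt{\,\cdot\,}$ when $n/h$ is (a power of) a prime $q$, i.e.\ exactly when $h\mid n$ or $n\mid h$ with $n/h=q^\ell$; collecting these and matching coefficients with the $d\underline\theta$-integral of $(\Re\CMcal{P}_{\mathcal{L}}(\underline\theta))^k\Re(e^{2\pi i\ell\theta_q})$ gives the second term on the right of \eqref{eq:lemma 3.4 linear combination}. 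The remaining off-diagonal terms (where $n/h$ is not a prime power) contribute only to the error term of the explicit formula, which is $O(\log(2nh))$ per term uniformly.

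To bound the total error I would multiply the number of monomials, which is at most $(2k)^k$ by a multinomial count, by the worst-case coefficient size (bounded since $|b(p)|\le\sum|a_i|$ and the denominators are $\ge 1$) and the per-term error $O(\log X^{2k})=O(k\log T)$ from the explicit formula, together with the condition $X^{2k}\le T^{1/8}$ which keeps the length below $T$ so the explicit formula's error terms are genuinely lower order; this yields the stated $O((Dk)^k\sqrt T\log^3 T)$. This follows the model of Proposition 4.1 / the moment computations in \cite{C}, with $\chi_j$ replaced by the single coefficient sequence $b(p)$. The main obstacle is bookkeeping the off-diagonal terms: one must check that every off-diagonal monomial with $n/h$ a prime power matches a term in the $q,\ell$ double sum with the correct sign and weight $\frac{\log q}{q^{\ell/2}}$ — this requires carefully tracking how the binomial/multinomial coefficients in the expansion of $(\Re\CMcal{P}_{\mathcal{L}}(\g))^k$ correspond to the $d\underline\theta$-integral of $(\Re\CMcal{P}_{\mathcal{L}}(\underline\theta))^k\Re(e^{2\pi i\ell\theta_q})$, and verifying the constraint $k\ll\sqrt{\log T}$ is exactly what makes $(Dk)^k=O(T^\varepsilon)$ so the error term is controlled.
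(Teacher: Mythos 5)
Your approach is essentially the paper's: expand $(\Re\CMcal{P}_{\mathcal L}(\g))^k$ via the binomial theorem into $\frac{1}{2^k}\sum_j\binom{k}{j}\CMcal{P}_{\mathcal L}(\g)^j\overline{\CMcal{P}_{\mathcal L}(\g)}^{k-j}$, apply a Landau--Gonek type explicit formula (the paper invokes Corollary~3.2 of~\cite{C}) to $\sum_{0<\g\le T}(n/h)^{i\g}$, and identify the diagonal with $N(T)\int(\Re\CMcal{P}_{\mathcal L}(\underline\theta))^k\,d\underline\theta$ via \eqref{orthogonality} and the $\Lambda(m/n)$ off-diagonal with the $q,\ell$ double sum. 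One caveat on your error accounting: you claim ``the number of monomials is at most $(2k)^k$,'' but the raw expansion has on the order of $(2\pi(X^2))^k$ monomials; the $(Dk)^k$ factor in the final error actually comes from bounding the permutation counts $b_j(n)\le j!$ and the coefficient products $|\kappa(n)|\le(|a_1|+\cdots+|a_N|)^k$ inside the sums over $n,m$, together with $X^{2k}\le T^{1/8}$, as the paper does by deferring to the proof of Lemma~5.1 in~\cite{C}. That is a bookkeeping fix rather than a conceptual gap.
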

{\emph{Remark.} } Notice that this holds without the $O$-term when $k=0$ if we  interpret the  sum over $\ell$ on the right to be empty in that case.
\begin{proof}
By the binomial theorem,
        \[
        \sum_{0 < \g \leq T} (\Re \CMcal{P}_{\mathcal{L}}(\g))^k
        =\frac{1}{2^k}\sum_{j=0}^k \binom{k}{j}
        \sum_{0 < \g \leq T}  \CMcal{P}_{\mathcal{L}}(\g)^j 
        \overline{\CMcal{P}_{\mathcal{L}}(\g)}^{k-j}.
        \]
For $0\leq j\leq k$, 
        \be\label{j term linear combination}
        \begin{split}
        \sum_{0 < \g \leq T}\CMcal{P}_{\mathcal L}(\g)^j 
         \overline{\CMcal{P}_{\mathcal L}(\g)}^{k-j}  
        =\sum_{0 < \g \leq T}\Big(\sum_{p\leq X^2} & \sdfrac{a_1\chi_1(p)+ \cdots +a_N\chi_N(p)}{p^{\frac12+i\g}}\Big)^j \\& \cdot \Big(\overline{\sum_{p\leq X^2}\sdfrac{a_1\chi_1(p)+\cdots+ a_N\chi_N(p)}{p^{\frac12+i\g}}}\Big)^{k-j}.
       \end{split}
        \ee
We will write
     \[
     \CMcal{P}_{\mathcal L}(\g)^j
     =\sum_{\substack{n=p_1\dots p_j,\\ p_i \leq X^2}}  \frac{b_j(n)\kappa(n)}{n^{\frac12+i\g}} 
     \quad \text{and}
     \quad
      \CMcal{P}_{\mathcal L}(\g)^{k-j}
     =\sum_{\substack{m=q_1\dots q_{k-j},\\ q_i \leq X^2}}  \frac{b_{k-j}(m)\kappa(m)}{m^{\frac12+i\g}} .
     \]   
     Here $p_1,\dots, p_j, q_1,\dots, q_{k-j}$ denote primes that are not necessarily distinct,   the coefficient $b_r(p_{1}\dots p_{r})$ denotes the number of permutations of the primes $p_{1},\dots, p_{r}$, and $b_r(n)=0$ if $n$ is not the product of $r$ primes. We have also 
     written    \[
    \kappa(p_{1}\dots p_{r}) 
    = \bigs(a_1\chi_1(p_{1})+\cdots + a_N\chi_N(p_{1})\bigs) 
    \dots \bigs(a_1\chi_1(p_{r})+\cdots + a_N\chi_N(p_{r})\bigs). 
    \] 
Using these notations and Corollary 3.2 in~\cite{C}, we find that \eqref{j term linear combination} equals
         \[
        \begin{split}
        N(T)&\sum_{n} \frac{b_j(n)\kappa(n)}{\sqrt n} \frac{b_{k-j}(n)\overline{\kappa(n)}}{\sqrt n}  \\
        -&\frac{T}{2\pi} \sum_{n}\sum_{m} 
        \frac{b_j(n)\kappa(n)}{\sqrt n}
        \frac{b_{k-j}(m)\overline{\kappa(m)}}{\sqrt m}
        \bigg(\frac{\Lambda(m/n)}{\sqrt{m/n}}+\frac{\Lambda(n/m)}{\sqrt{n/m}} \bigg) \\
         &+O \Big(X^{2k}\log T \log\log T\sum_n\sdfrac{b_j(n)|\kappa(n)|}{n}
         \sum_{n<m}b_{k-j}(m) |\kappa(m)| \Big) \\
        &+O \Big(X^{2k}\log T \log\log T
        +\sum_m\sdfrac{b_{k-j}(m) |\kappa(m)|}{m}\sum_{m<n} b_j(n) |\kappa(n)| \Big) \\
        &+O \Big(X^{2k}\log^2{T}\Big(\sum_n\sdfrac{b_j(n)^2|\kappa(n)|^2}{n}
        +\sum_m\sdfrac{b_{k-j}(m)^2 |\kappa(m)|^2 }{m}\Big) \Big).
        \end{split}
        \]
In this we again have $n=p_1\dots p_j$ and $m=q_1\dots q_{k-j}$. The rest of the proof is similar to that of Lemma 5.1 in~\cite{C}. The two main terms in the above will match the main terms on the right-hand side of \eqref{eq:lemma 3.4 linear combination}, and one  uses the bound $|\kappa(p_{i_1}\dots p_{i_r})| \leq (|a_1|+\cdots+|a_N|)^r$ to estimate the error terms.
\end{proof}

  Recalling that   $ \Psi (Y)=\sum_{p\leq Y}p^{-1}$,
  we will use the following parameters in  subsequent arguments:
  \be\label{K X}
    K=2[\Psi(T)^6]  \quad
     \text{and} \quad X \leq T^{\frac{1}{16\Psi(T)^6}}.
    \ee
    Here $[\cdot]$ denotes the greatest integer function.
\begin{lem}\label{fourier linear combination}
Assume RH.
Let $K$ and $X$ be as in \eqref{K X}. Then for $\omega\in [0, (\log\log T)^2]$ we have
        \be\label{eq:lemma fourier}
        \begin{split}
        \sum_{0 < \g \leq T}\exp( i \omega&\Re{\CMcal{P}_{\mathcal L}(\g)}) 
        = N(T)\int_0^1
         \exp( i \omega \Re{\CMcal{P}_{\mathcal L}(\underline{\theta})})\mathop{d\underline{\theta}} \\
        &-\frac{T}{\pi}\sum_{q\leq X^2} \sum_{1\leq\ell\leq K} \frac{\log q}{q^{\frac{\ell}{2}}}
        \int_0^1\exp(i\omega\Re{\CMcal{P}_{\mathcal L}(\underline{\theta})})
        \Re( e^{2\pi i\ell\theta_q}) \mathop{d\underline{\theta}} 
        +O\Big( \frac{N(T)\omega}{2^K}\Big).
        \end{split}
        \ee
        The implied constant in the $O$-term depends on $a_1, \ldots, a_N$ and $N$.
\end{lem}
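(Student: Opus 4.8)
The plan is to expand the exponential as a power series, truncate it at degree $K$, insert the identity of Lemma~\ref{lemma 3.4 linear combination} for each of the power sums $\sum_{0<\g\le T}\bigl(\Re\CMcal{P}_{\mathcal{L}}(\g)\bigr)^{k}$ that appear, and then re-assemble the two terms on the right of \eqref{eq:lemma fourier} on the random-model side. Every error term that arises is controlled by the moment estimates of Lemmas~\ref{our lemma 3.16 linear combination} and~\ref{Tsang lemma 3.4 linear combination} together with Stirling's formula and the (enormous) size of $K$. This is the analogue for $\CMcal{P}_{\mathcal{L}}$ of the Fourier-type lemma for $\z$ proved in~\cite{C}, so I only sketch the steps.

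Using $\bigl|e^{iu}-\sum_{k=0}^{K-1}(iu)^{k}/k!\bigr|\le |u|^{K}/K!$ with $u=\omega\,\Re\CMcal{P}_{\mathcal{L}}(\g)$ and summing over $0<\g\le T$, the left side of \eqref{eq:lemma fourier} equals $\sum_{k=0}^{K-1}\tfrac{(i\omega)^{k}}{k!}\sum_{0<\g\le T}\bigl(\Re\CMcal{P}_{\mathcal{L}}(\g)\bigr)^{k}$ plus an error $\ll \omega^{K}K!^{-1}\sum_{0<\g\le T}\bigl|\Re\CMcal{P}_{\mathcal{L}}(\g)\bigr|^{K}$. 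By Lemma~\ref{our lemma 3.16 linear combination} and Stirling this error is $\ll N(T)\bigl(De^{2}\omega^{2}\Psi(X^{2})/K\bigr)^{K/2}$; since $K\asymp\Psi(T)^{6}$ while $\omega\le(\log\log T)^{2}$ and $\Psi(X^{2})\ll\log\log T$, the base tends to $0$, so this is $\ll N(T)/2^{K}$. Then, replacing each $\sum_{0<\g\le T}\bigl(\Re\CMcal{P}_{\mathcal{L}}(\g)\bigr)^{k}$ ($0\le k\le K-1$) by its value from Lemma~\ref{lemma 3.4 linear combination}, the explicit $O$-terms of that lemma contribute $\sum_{k\le K-1}\tfrac{\omega^{k}}{k!}(Dk)^{k}\sqrt T\log^{3}T\ll K(De\omega)^{K}\sqrt T\log^{3}T=T^{1/2+o(1)}$, which is again $\ll N(T)/2^{K}$.

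Two main terms remain. After interchanging summations the first is $N(T)\sum_{k<K}\tfrac{(i\omega)^{k}}{k!}\int_{0}^{1}\bigl(\Re\CMcal{P}_{\mathcal{L}}(\underline{\theta})\bigr)^{k}\,d\underline{\theta}$; completing this truncated Taylor polynomial to $N(T)\int_{0}^{1}e^{i\omega\Re\CMcal{P}_{\mathcal{L}}(\underline{\theta})}\,d\underline{\theta}$ costs $\ll N(T)\,\omega^{K}K!^{-1}\int_{0}^{1}\bigl|\Re\CMcal{P}_{\mathcal{L}}(\underline{\theta})\bigr|^{K}\,d\underline{\theta}\ll N(T)/2^{K}$ by Lemma~\ref{Tsang lemma 3.4 linear combination} and the same Stirling bound. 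The second main term, after interchange, is
\[
-\frac{T}{\pi}\sum_{q\le X^{2}}\sum_{1\le\ell\le K-1}\frac{\log q}{q^{\ell/2}}\sum_{\ell\le k\le K-1}\frac{(i\omega)^{k}}{k!}\int_{0}^{1}\bigl(\Re\CMcal{P}_{\mathcal{L}}(\underline{\theta})\bigr)^{k}\Re\bigl(e^{2\pi i\ell\theta_{q}}\bigr)\,d\underline{\theta}.
\]
By the orthogonality relation \eqref{orthogonality} the integral vanishes when $k<\ell$, so the inner sum runs over $0\le k\le K-1$ and is to be completed to $\int_{0}^{1}e^{i\omega\Re\CMcal{P}_{\mathcal{L}}(\underline{\theta})}\Re\bigl(e^{2\pi i\ell\theta_{q}}\bigr)\,d\underline{\theta}$. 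Decompose $\Re\CMcal{P}_{\mathcal{L}}(\underline{\theta})=A_{q}(\theta_{q})+B(\underline{\theta}')$, where $A_{q}(\theta_{q})=\Re\bigl((a_{1}\chi_{1}(q)+\cdots+a_{N}\chi_{N}(q))\,q^{-1/2}e^{2\pi i\theta_{q}}\bigr)$ satisfies $|A_{q}|\ll D/\sqrt q$ and $B$ collects the remaining primes; integrating out $\theta_{q}$ gives a Bessel-coefficient bound $\int_{0}^{1}e^{i\omega\Re\CMcal{P}_{\mathcal{L}}}\Re\bigl(e^{2\pi i\ell\theta_{q}}\bigr)\,d\underline{\theta}\ll (\omega D)^{\ell}/(\ell!\,q^{\ell/2})$, and the same extra factor $q^{-\ell/2}$ is present in the truncated sum, hence in the completion error for each pair $(q,\ell)$. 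That error is $\ll 2^{-K}$ by Lemma~\ref{Tsang lemma 3.4 linear combination} and Stirling, so weighting by $\tfrac{T}{\pi}\tfrac{\log q}{q^{\ell/2}}$ and using this extra $q^{-\ell/2}$ — which collapses the $q$- and $\ell$-sums to $\sum_{q\le X^{2}}(\log q)/q\ll\log X$ — makes the total $\ll N(T)/2^{K}$; the $\ell=K$ term omitted from the right of \eqref{eq:lemma fourier} is negligible for the same reason. Assembling all of this gives \eqref{eq:lemma fourier}.

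The step I expect to be the main obstacle is this last re-assembly. The point is that the per-$(q,\ell)$ completion error, bounded crudely with only $|\Re(e^{2\pi i\ell\theta_{q}})|\le1$, is $\ll 2^{-K}$, and summing it over the $\asymp X^{2}$ primes $q\le X^{2}$ would give a total of size $\asymp TX/2^{K}$, far larger than the claimed $O(N(T)\omega/2^{K})$ because $X$ is a (small) power of $T$; one must therefore keep the $\theta_{q}$-Fourier structure throughout and extract the saving $q^{-\ell/2}$ for every prime (via the $A_{q}+B$ splitting above, or via $\ell$-fold integration by parts in $\theta_{q}$) so that the sum over $q$ converges. A secondary bookkeeping task is to verify that the choices $K=2[\Psi(T)^{6}]$ and $X\le T^{1/(16\Psi(T)^{6})}$ in \eqref{K X} meet the hypotheses of Lemmas~\ref{our lemma 3.16 linear combination}, \ref{Tsang lemma 3.4 linear combination}, and~\ref{lemma 3.4 linear combination} for all exponents up to $K$ while keeping every error term below $N(T)/2^{K}$.
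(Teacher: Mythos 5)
Your proposal is correct and follows essentially the same route as the paper's proof: truncate $e^{ix}$ at $K$ terms, insert Lemma~\ref{lemma 3.4 linear combination} for the power sums, control the truncation and completion errors with Lemmas~\ref{our lemma 3.16 linear combination} and~\ref{Tsang lemma 3.4 linear combination} plus Stirling and the choice \eqref{K X}, and---exactly as you flag as the main obstacle---rescue the second main term by using orthogonality to extract the extra factor $q^{-\ell/2}$ so that the sum over $q$ converges (the paper does this by an explicit expansion of the tail $T_1^{\pm}(q)$ in the case $\ell=1$, where the crude bound genuinely fails, and is content with the crude bound for $\ell\geq 2$). The only cosmetic differences are that you extract the $q$-saving uniformly in $\ell$ via the $A_q+B$ splitting rather than only for $\ell=1$, and that your stated error $N(T)/2^{K}$ should retain a factor of $\omega$ (trivial, since every error term carries at least one power of $\omega$) to match the claimed $O\big(N(T)\omega/2^{K}\big)$.
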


\begin{proof}
The proof is very similar to that of Lemma 5.4 in~\cite{C}. 
The basic idea is to use Lemma \ref{lemma 3.4 linear combination} together with the estimate 
    \be\label{eq:exponential}
    e^{ix}=\sum_{0\leq k<K}\frac{(ix)^k}{k!}+O\Big(\frac{|x|^K}{K!}\Big) \qquad\qquad
    (x\in\mathbb{R}).
    \ee     
By \eqref{eq:exponential}, 
        \be\label{eq:exponential expansion linear combination}
        \begin{split}
        \sum_{0 < \g \leq T} \exp(i \omega\Re{\CMcal{P}_{\mathcal L}(\g)}) 
        =\sum_{0\leq k< K} & \frac{( i \omega)^k}{k!}
        \sum_{0 < \g \leq T} (\Re{\CMcal{P}_{\mathcal L}(\g)} )^k  \\
        &+O\Big(\frac{\omega^K}{K!}\sum_{0 < \g \leq T}
        |\Re{\CMcal{P}_{\mathcal L}(\g)}|^K\Big).
        \end{split}
        \ee
 By Lemma \ref{our lemma 3.16 linear combination} and Stirling's formula, the $O$-term is    
    \be\notag
    \begin{split}
        &\ll N(T)\frac{\omega^K}{K!}(DK\Psi(X^2))^{\frac K2} 
        \ll N(T) \omega \, \frac{D^K\omega^{K-1} }{K^K}(DK\Psi(X^2))^{\frac K2} \\
       & \ll N(T)\omega\Big(\frac{D ( \log\log T)^2 \sqrt{\Psi(T)}}{\sqrt K}\Big)^K
        \ll N(T)\frac{\omega}{2^{K}},
\end{split}
\ee
where the implied constant depends, at most, on $a_1, \ldots, a_N$ and $N$.
Now we apply Lemma \ref{lemma 3.4 linear combination} to the main term on the right-hand side of \eqref{eq:exponential expansion linear combination} to see that 
        \be\label{eq:fourier linear combination} 
        \begin{split}
        \sum_{0\leq k< K}\frac{( i \omega)^k}{k!}\sum_{0 < \g \leq T} 
        &(\Re{\CMcal{P}_{\mathcal L}(\g)})^k 
        =N(T)\sum_{0\leq k< K}\frac{( i \omega)^k}{k!}\int_0^1
         ( \Re{\CMcal{P}_{\mathcal L}(\underline{\theta})})^k \mathop{d\underline{\theta}}  \\
        &-\frac{T}{\pi}
         \sum_{1\leq k< K}\frac{(i \omega)^k}{k!} 
         \sum_{q\leq X^2}  \sum_{1\leq\ell\leq k}
        \frac{\log q}{q^{\ell/2}}\int_0^1
         ( \Re{\CMcal{P}_{\mathcal L}(\underline{\theta})} )^k 
        \Re( e^{2\pi i\ell\theta_q})\mathop{d{\underline{\theta}}}   \\
         &+O\Big(\sqrt T\log^3{T}\sum_{1\leq k< K}\frac{ \omega^k}{k!}(Dk)^k\Big). \\
        &= S_1+ S_2+S_3 . 
        \end{split}
        \ee
Notice that there is no $k=0$ term  in either $S_2$ or $S_3$  in light of  the remark after Lemma \ref{lemma 3.4 linear combination}.

The estimation of $S_1$ is straightforward. By  \eqref{K X} and \eqref{eq:exponential} we find that
        \be\label{S1 fourier linear combination}
        S_1
        =N(T)\int_0^1 \exp( i \omega \Re{\CMcal{P}_{\mathcal L}(\underline{\theta})})
         \mathop{d\underline{\theta}} 
        +O\Big(N(T) \frac{\omega}{2^{K}}\Big).
        \ee
The estimation of $S_3$ is also not difficult. 
Our choice of parameters in \eqref{K X} leads to
        \be\label{S3 fourier linear combination}
        S_3 
        \ll \sqrt{T}\log^3{T} \sum_{1\leq k< K}\frac{\omega^k}{k!}(Dk)^k
        \ll  \omega \, (\log\log T)^{2K}\sqrt{T}\log^3{T} \,  e^{DK}
        \ll    \frac{N(T)\omega}{2^K},
        \ee      
which is crude but sufficient for our purpose.
        
The estimation of $S_2$ 
is more involved. First, we extend the inner sum in $S_2$ from 
   $1\leq \ell \leq k$ to $1\leq \ell \leq K.$ To justify this, notice that by the binomial theorem,  
        \[
        \int_0^1\bigs( \Re{\CMcal{P}_{\mathcal L}(\underline{\theta})}\bigs)^k e^{2\pi i\ell\theta_q}\mathop{d{\underline{\theta}}} 
        = \frac{1}{2^k} \sum_{j=0}^k \binom{k}{j} \\
       \int_0^1 \CMcal{P}_{\mathcal L}(\underline{\theta})^{j} \overline{\CMcal{P}_{\mathcal L}(\underline{\theta})}^{k-j}
        e^{2\pi i\ell\theta_q} \mathop{d{\underline{\theta}}}.
        \]
Multiplying out the integrand and using \eqref{orthogonality}, we see that the $j$th term  is $0$ unless $k-j=j+\ell$. Since  $0\leq j\leq k,$ this implies that $1\leq \ell \leq k.$ 
The same is true for the complex conjugate of this integral. Thus, the integral in $S_2$ is $0$ for $\ell> k$ and we can write
    \be\label{S2 fourier linear combination}
    S_2
    =  -\frac{T}{\pi}\sum_{q\leq X^2} \sum_{1\leq\ell\leq K}
     \frac{\log q}{q^{\frac{\ell}{2}}} I_\ell (q),
    \ee
where
    \[
    I_\ell (q)= \int_0^1
         \bigg( \sum_{0\leq k< K} 
         \frac{\bigs( i \omega \Re{\CMcal{P}_{\mathcal L}(\underline{\theta})}\bigs)^k}{k!}  \bigg)
         \Re (e^{2\pi i\ell\theta_q})\mathop{d\underline{\theta}}.
    \]    
The sum over $k$ in $I_\ell(q)$ is approximately $\exp\bigs( i\omega \Re{\CMcal{P}_{\mathcal L}(\underline{\theta})}\bigs)$. In fact, if $\ell>1$, then 
    \be\label{I_ell linear combination}
    I_\ell(q)= \int_0^1 \exp\bigs( i\omega \Re{\CMcal{P}_{\mathcal L}(\underline{\theta})}\bigs)
     \Re (e^{2\pi i\ell\theta_q}) \mathop{d\underline{\theta}}
    +O\bigg(\frac{\omega}{2^K}\bigg)
    \ee
by \eqref{eq:exponential}, Lemma \ref{Tsang lemma 3.4 linear combination}, and our choice of parameters in \eqref{K X}. If $\ell=1$, the analysis  is more intricate. We begin by writing   
       \be\label{I1+T1 linear combination}
       \begin{split}     
           \int_0^1 \exp\bigs( i\omega \Re{\CMcal{P}_{\mathcal L}(\underline{\theta})}\bigs)
           \Re(e^{2\pi i  \theta_q})\mathop{d\underline{\theta}} 
            =  I_1(q) +  T_1(q),
        \end{split}
         \ee   
where
    \[
    T_1(q)
     =\sum_{k\geq  K}\frac{( i \omega)^k}{k!} 
           \int_0^1( \Re{\CMcal{P}_{\mathcal L}(\underline{\theta})})^k 
           \Re(e^{2\pi i\theta_q})\mathop{d{\underline{\theta}}}.
    \]         
We wish to show that $T_1(q)$ is small. First consider
    \[
    {T_1}^{+}(q)
       = \sum_{k\geq K}\frac{( i \omega)^k}{k!} 
           \int_0^1\bigs( \Re{\CMcal{P}_{\mathcal L}(\underline{\theta})}\bigs)^k e^{2\pi i\theta_q}\mathop{d{\underline{\theta}}}.
    \]  
The integral here is
        \begin{align*}
         \int_0^1\bigs( \Re{\CMcal{P}_{\mathcal L}(\underline{\theta})}\bigs)^k e^{2\pi i\theta_q}\mathop{d{\underline{\theta}}}= 
         \frac{1}{2^k} \sum_{j=0}^k \binom{k}{j}   
            \int_0^1 \bigg(\sum_{p\leq X^2}\frac{(a_1\chi_1(p)+\ldots+a_N\chi_N(p))e^{2\pi i\theta_p}}{\sqrt p}\bigg)^{j}& \\
        \cdot\bigg(\overline{\sum_{p\leq X^2}\frac{(a_1\chi_1(p)+\ldots+a_N\chi_N(p))e^{2\pi i\theta_p}}{\sqrt p}}\bigg)^{k-j}e^{2\pi i\theta_q}
        \mathop{d{\underline{\theta}}}&.
        \end{align*}
By \eqref{orthogonality}, the integral  on the right-hand side equals zero unless $j+1=k-j,$ that is, unless $j=(k-1)/{2}.$ In that case, we have $q_1\dots q_{(k+1)/2}=p_1\dots p_{(k-1)/2}\,q\,$ for some primes $p_1,\dots,p_{(k-1)/2}$ and  $q_1,\dots, q_{(k+1)/2}.$
Thus
\be\notag
\begin{split}
   \int_0^1 &\bigs( \Re{\CMcal{P}_{\mathcal L}(\underline{\theta})}\bigs)^k 
   e^{2\pi i\theta_q}\mathop{d{\underline{\theta}}} \\
   &=  \frac{1}{2^k\sqrt q}
    \binom{k}{ \frac{k-1}{2}} 
   \sum_{p_1,\dots,p_{(k-1)/2}  \leq X^2} 
    \sdfrac{b_{\frac{k+1}{2}}( p_1\dots p_{(k-1)/2} q) \varkappa_{\frac{k-1}{2}}(p_1\dots p_{(k-1)/2})
   \overline{\varkappa_{\frac{k+1}{2}}( p_1\dots p_{(k-1)/2}q)}}{p_1\dots p_{(k-1)/2}},
\end{split}
\ee
   where
   \[
   \varkappa_r(p_{1}\dots p_{r})
   = \prod_{i=1}^{r}(a_1\chi_1(p_{i})+\cdots +a_N\chi_N(p_{i})),
   \]
   and $b_r(p_{1}\dots p_{r})$ denotes the number of permutations of primes $p_{1},\dots, p_{r}$. Clearly 
   $$
     b_{(k+1)/2}( p_1\dots p_{(k-1)/2}\ q)
     \leq \big(\tfrac{k+1}{2}\big)!, 
$$
 and 
$$
\prod_{i=1}^r  |(a_1\chi_1(p_i)+\cdots +a_N\chi_N(p_i)|     \leq (|a_1|+\cdots+|a_N|)^r.
$$
    From these bounds we obtain
    \begin{align*}
     \sum_{p_1,\dots,p_{(k-1)/2}  \leq X^2}& \sdfrac{b_{(k+1)/2}( p_1\dots p_{(k-1)/2}\ q) \varkappa_{(k-1)/2}(p_1\dots p_{(k-1)/2})
   \overline{\varkappa_{(k+1)/2}( p_1\dots p_{(k-1)/2}q)}}{p_1\dots p_{(k-1)/2}} \\
    & \leq (|a_1|+\cdots +|a_N|)^{\tfrac{k-1}{2}} \big(\tfrac{k+1}{2}\big)! \sum_{p_1,\dots,p_{(k-1)/2}  \leq X^2}
    \frac{|\varkappa_{(k+1)/2}(p_1\dots p_{(k-1)/2}\ q)|}{p_1\dots p_{(k-1)/2}} \\
    &\leq (|a_1|+\cdots+|a_N|)^{k}  \big(\tfrac{k+1}{2}\big)! \Big(\sum_{p\leq X^2}\frac{1}{p}\Big)^{\frac{k-1}{2}}
    \leq k  \big(\tfrac{k-1}{2}\big)! (D\Psi(X^2))^{\frac{k-1}{2}}.
    \end{align*}
    Hence 
     \[
      \int_0^1\bigs( \Re {\CMcal{P}_{\mathcal L}(\underline{\theta})}\bigs)^k e^{2\pi i\theta_q}\mathop{d{\underline{\theta}}}
      \ll \frac{k}{\sqrt q}\binom{k}{ \frac{k-1}{2}} \big(\tfrac{k-1}{2}\big)! (D\Psi(X^2))^{\frac{k-1}{2}}
      \ll \frac{k}{\sqrt q} (Dk\Psi(X^2))^{\frac{k-1}{2}},
     \]
   and therefore
        \begin{align*}
        {T_1}^{+}(q)
        \ll & \frac1{\sqrt q} \sum_{k\geq K} k \frac{ \omega^k}{k!}(Dk\Psi(X^2))^{\frac{k-1}{2}} 
        \ll  \frac{ \omega }{\sqrt q} \sum_{k\geq K} \frac{( D\, (\log\log T)^2 \sqrt{k\Psi(X^2)})^{k-1}}{(k-1)!}  \\
         \ll &   \frac{ \omega }{\sqrt q} \sum_{k\geq K}  (  D (\log\log T)^2  \sqrt{ {\Psi(X^2)}/{k} })^{k-1}.
        \end{align*}
By our choice of parameters in \eqref{K X} we see that the terms of the sum are $\ll 2^{-k}$ when $T$ is sufficiently large. Thus
  \[
    {T_1}^{+}(q)
             \ll  \frac{\omega}{2^K \sqrt q}.
    \]  
Similarly, we have
\[
  {T_1}^{-}(q)= \sum_{k\geq K}\frac{( i \omega)^k}{k!} 
       \int_0^1\bigs( \Re {\CMcal{P}_{\mathcal L}(\underline{\theta})}\bigs)^k e^{-2\pi i\theta_q}\mathop{d{\underline{\theta}}}
       \ll  \frac{\omega}{2^K\sqrt q}.
\]
It follows that $T_1(q) \ll \omega/(2^K\sqrt q)$.  Hence, from \eqref{I1+T1 linear combination} we obtain
    \[
     I_1(q) 
     =\int_0^1   \exp\bigs( i\omega \Re {\CMcal{P}_{\mathcal L}(\underline{\theta})}\bigs)
     \Re(e^{2\pi i  \theta_q})\mathop{d\underline{\theta}} 
    +O\Big(\frac{\omega}{2^K \sqrt q}\Big).
     \]
Combining this with \eqref{I_ell linear combination} in  \eqref{S2 fourier linear combination}, we find that
    \[
     S_2  
     =-\frac{T}{\pi} \sum_{q\leq X^2}   \sum_{ 1\leq\ell\leq K}
     \frac{\log q}{q^{\frac{\ell}{2}}} \; 
        \int_0^1   \exp{  \bigs(i\omega \Re{\CMcal{P}_{\mathcal L}(\underline{\theta})}\bigs)}
        \Re(e^{2\pi i \ell \theta_q}) \mathop{d\underline{\theta}}  
        +O\Big(\frac{\omega T}{2^K} \sum_{q\leq X^2} \frac{\log q}{q}\Big).
      \]
The sum in the error term is $O( \log T),$ so 
     \[
      S_2 
       =-\frac{T}{\pi}  \sum_{q\leq X^2}   \sum_{ 1\leq\ell\leq K}
       \frac{\log q}{q^{\frac{\ell}{2}}} \; 
        \int_0^1 \exp\bigs( i\omega \Re {\CMcal{P}_{\mathcal L}(\underline{\theta})}\bigs)
        \Re(e^{2\pi i \ell \theta_q})\mathop{d\underline{\theta}} 
        +O\Big(\frac{\omega N(T)}{2^K}\Big) .
      \]
Inserting  this estimate, the results of \eqref{S1 fourier linear combination}, and \eqref{S3 fourier linear combination} into \eqref{eq:fourier linear combination}, we obtain the assertion of the lemma.
 \end{proof}


\begin{cor}\label{cor: fourier}
Let $K$ and $X$ be as in \eqref{K X}. Then for $\omega\in [0, \sqrt{2/N}(\log\log T)^{\frac12-\e}]$ with $0<\e<\frac12$, we have
\[
        \sum_{0 < \g \leq T}\exp\bigg(  i \omega \sdfrac{\Re{\CMcal{P}_{\mathcal L}(\g)}}{\sqrt{\tfrac{1}{2}\big( {a_1}^2+\dots+{a_N}^2\big) \log\log T}}\bigg) 
  =      e^{- \frac{\omega^2}{2}}N(T) + o(N(T)) . 
\]
The $o$-term constant depends at most on the $a_i, m_i$ and $N$.
\end{cor}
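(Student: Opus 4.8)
The plan is to apply Lemma~\ref{fourier linear combination} with $\omega$ replaced by
\[
\widetilde{\omega}=\frac{\omega}{\sqrt{\tfrac12\bigl({a_1}^2+\cdots+{a_N}^2\bigr)\log\log T}}\,,
\]
which is permissible because for $\omega$ in the stated range one has $\widetilde\omega\ll(\log\log T)^{-\e}=o(1)$, so in particular $\widetilde\omega\in[0,(\log\log T)^2]$ for $T$ large; and since $\widetilde\omega\,\Re\CMcal{P}_{\mathcal L}(\g)$ is exactly the argument of the exponential in the corollary, the left-hand side of the identity of Lemma~\ref{fourier linear combination} is precisely the sum we wish to estimate. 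The error term $O(N(T)\widetilde\omega/2^K)$ there is $o(N(T))$ since $2^K\to\infty$, so everything reduces to (i) evaluating $N(T)\int_0^1\exp(i\widetilde\omega\,\Re\CMcal{P}_{\mathcal L}(\underline{\theta}))\,\mathop{d\underline{\theta}}$ and (ii) showing the double sum over $q,\ell$ on the right-hand side is $o(N(T))$.

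For (i) I would use the independence of the $\theta_p$ to write the integral as a product over primes. Abbreviating $\kappa(p)=a_1\chi_1(p)+\cdots+a_N\chi_N(p)$,
\[
\int_0^1\exp\bigl(i\widetilde\omega\,\Re\CMcal{P}_{\mathcal L}(\underline{\theta})\bigr)\mathop{d\underline{\theta}}=\prod_{p\le X^2}\varphi_p(\widetilde\omega),\qquad\varphi_p(t)=\int_0^1\exp\Bigl(it\,\Re\tfrac{\kappa(p)e^{2\pi i\theta}}{\sqrt p}\Bigr)\,d\theta,
\]
and, the odd moments of the cosine vanishing while $\int_0^1\cos^2(2\pi\theta)\,d\theta=\tfrac12$, each factor satisfies $\varphi_p(t)=1-\tfrac{t^2|\kappa(p)|^2}{4p}+O\bigl(\tfrac{t^4|\kappa(p)|^4}{p^2}\bigr)$. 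Because $\widetilde\omega|\kappa(p)|/\sqrt p\ll\widetilde\omega\to0$, taking logarithms and summing gives $\log\prod_{p\le X^2}\varphi_p(\widetilde\omega)=-\tfrac{\widetilde\omega^2}{4}\sum_{p\le X^2}|\kappa(p)|^2/p+O(\widetilde\omega^4)$, the error being finite since $\sum_p|\kappa(p)|^4/p^2$ converges. By the variance computation around~\eqref{compute variance linear combination}, $\sum_{p\le X^2}|\kappa(p)|^2/p=2\int_0^1(\Re\CMcal{P}_{\mathcal L}(\underline{\theta}))^2\mathop{d\underline{\theta}}=\bigl({a_1}^2+\cdots+{a_N}^2\bigr)\Psi(X^2)+O(1)$, and $\Psi(X^2)=\log\log T+O(\log\log\log T)$ for $X$ the power of $T$ in~\eqref{K X}. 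Combining these with $\widetilde\omega^2\bigl({a_1}^2+\cdots+{a_N}^2\bigr)=2\omega^2/\log\log T$ turns the exponent into $-\tfrac{\omega^2}{2}+O\bigl(\omega^2\log\log\log T/\log\log T\bigr)+O(\widetilde\omega^2)=-\tfrac{\omega^2}{2}+o(1)$, uniformly for $0\le\omega\le\sqrt{2/N}(\log\log T)^{\frac12-\e}$. Hence the main term equals $N(T)\bigl(e^{-\omega^2/2}+o(1)\bigr)=e^{-\omega^2/2}N(T)+o(N(T))$.

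For (ii) I would factor in the same way: since $\Re(e^{2\pi i\ell\theta_q})$ involves only $\theta_q$, the integral appearing in the double sum equals $\bigl(\prod_{p\le X^2,\,p\ne q}\varphi_p(\widetilde\omega)\bigr)$ times $\int_0^1\exp(i\widetilde\omega\,\Re\tfrac{\kappa(q)e^{2\pi i\theta}}{\sqrt q})\cos(2\pi\ell\theta)\,d\theta$. Each $\varphi_p(\widetilde\omega)$ is a characteristic function, hence of modulus $\le1$, so the product is at most $1$ in absolute value; and expanding the exponential and using $\int_0^1\cos^k(2\pi\theta)\cos(2\pi\ell\theta)\,d\theta=0$ for $k<\ell$ shows the remaining one-dimensional integral is $\ll(\widetilde\omega|\kappa(q)|/\sqrt q)^\ell/\ell!$. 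Summing the resulting geometric-type series,
\[
\frac{T}{\pi}\sum_{q\le X^2}\sum_{1\le\ell\le K}\frac{\log q}{q^{\ell/2}}\,\Bigl|\int_0^1\cdots\,\Bigr|\ \ll\ T\sum_{q\le X^2}\log q\sum_{\ell\ge1}\frac{(D\widetilde\omega/q)^\ell}{\ell!}\ \ll\ T\widetilde\omega\sum_{q\le X^2}\frac{\log q}{q}\ \ll\ T\widetilde\omega\log T,
\]
which is $o(T\log T)=o(N(T))$ because $\widetilde\omega\to0$. Assembling (i), (ii), and the negligible error term of Lemma~\ref{fourier linear combination} gives the corollary.

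The factorizations and the geometric-series bound are routine; the delicate point is the uniformity in step (i), where every displayed error term must be $o(1)$ for all $\omega$ up to $\sqrt{2/N}(\log\log T)^{1/2-\e}$. This is precisely why the permitted range stops a little below $(\log\log T)^{1/2}$: the approximation $\Psi(X^2)=\log\log T+O(\log\log\log T)$ forced by the size of $X$ in~\eqref{K X} is only barely strong enough to absorb one power of $\omega^2$ once $\e>0$.
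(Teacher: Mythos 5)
Your proposal is correct and follows essentially the same route as the paper: it feeds the (rescaled) frequency into Lemma~\ref{fourier linear combination}, factors the multidimensional integrals over primes via the independence of the $\theta_p$ (your Taylor expansion of each one-dimensional factor is just the Bessel-function expansion the paper uses explicitly), evaluates the main term through $\Psi(X^2)=\log\log T+O(\log\log\log T)$, and bounds the $q,\ell$ double sum trivially by $o(N(T))$. The only cosmetic difference is that the paper first proves the asymptotic for $\omega$ in a fixed bounded interval and rescales at the end, whereas you substitute $\widetilde\omega=o(1)$ from the start; the content is the same.
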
 

\begin{proof}
We argue from the formula in  \eqref{eq:lemma fourier} of Lemma~\ref{fourier linear combination}. 
Let
\[
\mathcal{J}_0 = \int_0^1
         \exp(i \omega \Re{\CMcal{P}_{\mathcal{L}}(\underline{\theta})})\mathop{d\underline{\theta}},
\]
denote the integral in the first main term in \eqref{eq:lemma fourier}.
Recall (see \eqref{P_L theta}) that
$$
\CMcal{P}_{\mathcal{L}}(\underline{\theta})
=\sum_{p\leq X^2}\frac{(a_1\chi_1(p)+
\cdots +a_N \chi_N(p))e^{2\pi i \theta_{p}}}{\sqrt p}.
$$
We write the $p^{th}$ coefficient  here as
$$
 \nu_p e^{2\pi i\b_p}= a_1\chi_1(p)+\cdots +a_N\chi_N(p) ,
 $$
 where
 $ \nu_p=|a_1\chi_1(p)+\cdots+a_N\chi_N(p)| \geq0$ and
 $0\leq \b_p <1$.  If $\nu_p=0,$ then we set $\b_p=0$. 
 Recalling the definition of the Bessel function $J_0(z)= \int_0^1 \exp\big( i z\cos(2\pi\theta)\big)\mathop{d\theta} $ and using the independence of the $\theta_p$, we see that
\be\notag
\begin{split}
\mathcal{J}_0 = & \prod_{p\leq X^2} \int_0^1\exp\Big( \frac{i \omega\nu_p}{\sqrt{p}}\cos(2\pi(\beta_p+\theta_p))\Big) d\theta_p\\
=& \prod_{p\leq X^2} J_0 \Big( \frac{\omega \nu_p}{\sqrt{p}}\Big) . 
\end{split}
\ee
It is known that
\[
J_0(2z) = e^{-z^2} + O \big( |z|^4 e^{-z^2} \big) \quad \text{for} \quad |z| \leq 1.
\]
Thus, if $0\leq \omega \leq 2\sqrt{2}/\sum_{j=1}^{N}|a_j|$, then
$$
\mathcal{J}_0 =\prod_{p\leq X^2} \exp\Big(-\frac{ \omega^2 \nu_p^2}{4p}\Big)
\prod_{p\leq X^2}\Big(1+O\Big(\frac{\omega^4\nu_{p}^4}{p^2}  \Big)\Big)
= \exp\Big(-\frac{\omega^2}{4} \sum_{p\leq X^2}\frac{\nu_p^2}{p}\Big)\,(1+O(\omega^4)),
$$
where the $O$-term depends on the moduli $m_i$ of the $\chi_i$ and the $a_i$.
We set
$$
 \Psi_{\mathcal L}= \sum_{p\leq X^2}\frac{\nu_p^2}{p}
 $$
and note for later that
\be\label{Psi_L}
\begin{split}
 \Psi_{\mathcal L}= &\sum_{i, j=1}^{N}a_i a_j  \sum_{p\leq X^2}\frac{\chi_i(p)\overline{\chi_j(p)}}{p}
 = \sum_{j=1}^{N}a_j^2  \sum_{p\leq X^2}\frac{|\chi_i(p)|^2}{p}
 + \sum_{i\neq j=1}^{N}a_i a_j  \sum_{p\leq X^2}\frac{\chi_i(p)\overline{\chi_j(p)}}{p}\\
 =& \Psi(X^2) (a_1^2+ \cdots +a_N^2)+O(1),
\end{split}
\ee
where the  $O$-term again depends on  the $m_i$   and  $a_i$. 
Hence, we find that
\be\notag
\begin{split}
\mathcal{J}_0 =\exp\Big(- \frac{ \omega^2 \Psi_{\mathcal L} }{4}\Big)\, (1+O(\omega^4)),
\end{split}
\ee
and the first main term in  \eqref{eq:lemma fourier} equals
\be\label{first main}
\begin{split}
\exp\Big(- \frac{ \omega^2 \Psi_{\mathcal L} }{4}\Big) N(T)\, (1+O(\omega^4))
\end{split}
\ee
for $\omega \in [0, 2\sqrt{2}/\big(\sum_{j=1}^{N}|a_j|\big)]$.

Next, we bound the second main term in \eqref{eq:lemma fourier}. We set
    \[
     {\mathcal J}_\ell
     =  \int_0^1\exp\bigs( i\omega\Re{\CMcal{P}_{\mathcal L}(\underline{\theta})}\bigs)
      e^{2\pi i\ell\theta_q}\mathop{d\underline{\theta}} \, ,
    \]
so that the second integral in \eqref{eq:lemma fourier} can be written as
    \begin{align*}
     \int_0^1 \exp\bigs(i\omega\Re{\CMcal{P}_{\mathcal L}(\underline{\theta})}\bigs) 
      \Re(e^{2\pi i\ell\theta_q})\mathop{d\underline{\theta}}       
    =   \frac12  \big(  {\mathcal J}_\ell +  {\mathcal J}_{-\ell} \big) .
    \end{align*}
We thus need to bound ${\mathcal J}_{\pm \ell}$. With $\nu_p$ and $\beta_p$ as above, and by  the independence 
of the $\theta_p$'s, we see that
        \be\label{J_l}
        \begin{split}
        {\mathcal J}_\ell
        =\int_0^1 \exp\Big( i &\Big(\omega\sdfrac{\nu_q\cos(2\pi(\theta_q+\b_q))}{\sqrt q}+2\pi\ell\theta_q\Big)\Big)\mathop{d{\theta_q}} \\
        \cdot \prod_{\substack{p\leq X^2\\ p\neq q}} &\int_0^1 
        \exp\Big(i\omega\sdfrac{\nu_p\cos(2\pi(\theta_p+\b_p))}{\sqrt p}\Big)\mathop{d{\theta_p}}.
        \end{split}
        \ee      
Here recall that, for $\ell $ a nonnegative integer, the Bessel function of  order $\ell$ is  
    \be\label{Jell integral}
    J_\ell(z)
    =(-i)^\ell\int_0^1 \exp\Big(i \big(z\cos(2\pi\theta)+2\pi  \ell\theta\big)\Big)\mathop{d\theta}.
    \ee          
After the change of variable  $\theta_p \to \theta_p-\b_p$, we see that each factor in \eqref{J_l} corresponding to a prime $p$ other than $q$ is equal to
     \[
    J_0\Big(\frac{\omega \nu_p}{\sqrt p}\Big).
     \]
For the integral in \eqref{J_l}  corresponding to the prime $q$, a similar change of variable leads to
       \[
        \int_{-\b_q}^{1-\b_q}
         \exp\Big( i\Big(\omega \nu_q\frac{\cos(2\pi\theta_q)}{\sqrt q}+2\pi\ell\big(\theta_q-\b_q\big)\Big)\Big)
        \mathop{d{\theta_q}}.
        \]
Since the integrand has period $1$, it follows from \eqref{Jell integral} that this equals
        \[
         (ie^{-2\pi i \b_q})^\ell J_{\ell}\Big(\frac{ \omega\nu_q }{\sqrt q}\Big).
        \]     
 Inserting these   expressions   into \eqref{J_l}, we obtain for each $\ell$ with
 $1\leq \ell \leq K$
        \be\notag
        {\mathcal J}_\ell
        =(ie^{-2\pi i \b_q})^\ell J_{\ell}\Big(\frac{ \omega  \nu_q}{\sqrt q}\Big)
        \prod_{\substack{p\leq X^2,\\p\neq q}}J_0\Big(\frac{\omega \nu_p }{\sqrt p}\Big).
        \ee       
Since $J_{-\ell}(z)=(-1)^\ell J_{\ell}(z)$,   we also have
        \[
        {\mathcal J}_{-\ell}
        =(-ie^{-2\pi i \b_q})^\ell J_{\ell}\Big(\frac{ \omega \nu_q}{\sqrt q}\Big)
        \prod_{\substack{p\leq X^2,\\p\neq q}}J_0\Big(\frac{ \omega \nu_p}{\sqrt p}\Big).
        \]
From the bound $\nu_p \leq |a_1|+\cdots+|a_N|$ and the argument in \cite[pp. 6027-28]{C} based on standard estimates for Bessel functions, we find that if $\omega \in[0, (\log\log T)^2]$, then
       \be\notag
       \begin{split}
        {\mathcal J}_{\ell}
       \ll &\frac{\big((|a_1|+\cdots+|a_N|)\, \omega \big)^{|\ell|}}{|\ell| !q^{\frac{|\ell|}{2}}}  e^{-c\Psi_{\mathcal{L}}\omega^2+\sqrt{2}
       (|a_1|+\cdots+|a_N|) \, \omega}  \\
       \ll &\frac{((|a_1|+\cdots+|a_N|)\omega)^{|\ell|}}{|\ell| !q^{\frac{|\ell|}{2}}}  e^{ -c\Psi_{\mathcal{L}}\omega^2}.
       \end{split}
        \ee        
Using this bound and \eqref{K X}, we obtain
\begin{align*}
-\frac{T}{\pi}\sum_{q\leq X^2} \sum_{1\leq\ell\leq K} \frac{\log q}{q^{\frac{\ell}{2}}}
        \int_0^1\exp( i\omega\Re{\CMcal{P}_{\mathcal L}(\underline{\theta})})
        \Re( e^{ i\ell\theta_q}) \mathop{d\underline{\theta}} 
\ll
T\log X =o(N(T)).
\end{align*}
Combining this with \eqref{first main} and observing that the $O$-term in \eqref{eq:lemma fourier} is $o(N(T))$, we find that 
for $\omega \in[0, 2\sqrt{2}/\sum_{j=1}^{N}|a_j|]$,
 \be\notag
        \begin{split}
        \sum_{0 < \g \leq T}\exp( i \omega&\Re{\CMcal{P}_{\mathcal L}(\g)}) 
        = \exp\Big(- \frac{ \omega^2 \Psi_{\mathcal L} }{4}\Big) N(T)\, (1+O(\omega^4)) +o(N(T)),
\end{split}
\ee
where the constant implicit in the $o$-term depends only on the $a_i$ and $m_i$. 
Now we replace $\omega$ here by $\omega/\sqrt{\tfrac{1}{2}\big( {a_1}^2+\dots+{a_N}^2\big) \log\log T}$ and obtain
 \be\label{sum gamma 2}
        \begin{split}
        \sum_{0 < \g \leq T}\exp\Big( i \omega&\frac{\Re{\CMcal{P}_{\mathcal L}(\g)} }{\sqrt{\tfrac{1}{2}\big( {a_1}^2+\dots+{a_N}^2\big) \log\log T}}\Big) \\
        =& \exp\Big(- \frac{ \omega^2 \Psi_{\mathcal L} }{2 \big( {a_1}^2+\dots+{a_N}^2\big) \log\log T}\Big) N(T)\, 
        \Big(1+O \Big(\frac{\omega^4}{(\log\log T)^2} \Big) \Big) +o(N(T)),
\end{split}
\ee
By \eqref{Psi_L} and our choice of $X$ in \eqref{K X},
$$
 \Psi_{\mathcal L}=(a_1^2+\cdots +a_N^2)\Psi(X^2) 
        =(a_1^2+\cdots+a_N^2)\log\log T+O(\log\log\log T).
$$
Thus, the exponential on the right-hand side of \eqref{sum gamma 2} equals
 \be\notag
 \begin{split}
 \exp\Big(- \frac{ \omega^2 }{2}\Big)\Big(1+O\Big(\frac{\omega^2\log\log\log T}{\log\log T}\Big)\Big).
 \end{split}
 \ee
 It now follows that 
  \be\notag
        \begin{split}
        \sum_{0 < \g \leq T}\exp\Big( i \omega&\frac{\Re{\CMcal{P}_{\mathcal L}(\g)} }{\sqrt{\tfrac{1}{2}\big( {a_1}^2+\dots+{a_N}^2\big) \log\log T}}\Big) 
        = \exp\Big(- \frac{ \omega^2  }{2} \Big) N(T) +o(N(T)),
\end{split}
\ee
for 
$$\omega\in\Big[0, \tfrac{\sqrt{ {2} ( {a_1}^2+\dots+{a_N}^2) }}{\sum_{i=1}^N{|a_i|}}(\log\log T)^{\frac12-\e}\Big].$$ 
Since
$\sum_{i=1}^N|a_i | \leq \sqrt{N\sum_{i=1}^N a_i^2 }$, we see that $\omega$ may be restricted to the smaller interval
$[0, \sqrt{2/N}(\log\log T)^{\frac12-\e}]$. This completes the proof of the corollary.
 
 \end{proof}


\section{Completing the Proof of Theorem~\ref{distr of linear combination} }

In this section, we prove the following result, which connects the characteristic functions of 
$\Re \CMcal{P}_{\mathcal{L}}(\gamma)$ and $\mathcal{L}(\rho).$
From this, the claim of Theorem \ref{distr of linear combination} follows by L\'{e}vy's continuity theorem.

\begin{prop}\label{last prop}
For $|\omega| =o\Big(  \frac{\sqrt{\log\log T}}{(\log\log T)^{6}\log\log\log T}\Big)$ and $X = T^{\frac{1}{16 (\log\log T)^{6}}}$, we have
\begin{align*}
 \sum_{0<\g \leq T} \exp \bigg(i\omega  &\frac{\mathcal{L}(\rho)}{\sqrt{\tfrac{1}{2}\big( {a_1}^2+\dots+{a_N}^2\big) \log\log T}} \bigg)
\\
&=  \sum_{0<\g \leq T} 
\exp \bigg(  i\omega \frac{\Re \CMcal{P}_{\mathcal{L}}(\gamma)}{\sqrt{\tfrac{1}{2}\big( {a_1}^2+\dots+{a_N}^2\big) \log\log T}}    \bigg)
+o(N(T)) . 
\end{align*}
\end{prop}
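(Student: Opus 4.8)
The plan is to deduce the proposition from the pointwise approximation of Lemma~\ref{log L approximate formula}. Write $V=\sqrt{\tfrac12(a_1^2+\cdots+a_N^2)\log\log T}$ for the normalizing factor. Since the $\chi_i$ are nonprincipal and primitive (hence of conductor $\geq 3$) and, by GRH and Hypothesis~$\mathscr{D}$, the point $\rho=\tfrac12+i\gamma$ is neither a nontrivial nor a trivial zero of any $L(s,\chi_i)$, Lemma~\ref{log L approximate formula} applies at every $\gamma$; summing the $N$ approximations with weights $a_i$ gives $\mathcal{L}(\rho)=\Re\CMcal{P}_{\mathcal{L}}(\gamma)+R(\gamma)$ with $|R(\gamma)|\leq\mathcal{R}(\gamma):=\sum_{i=1}^{N}|a_i|\big(\sum_{j=1}^{4}r_{\chi_i,j}(X,\gamma)+\tfrac{\log(m_i|\gamma|)}{\log X}+1\big)$. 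Using $|e^{iu}-e^{iv}|\leq\min(|u-v|,2)$ for real $u,v$, the difference of the two exponential sums in the statement is bounded in absolute value by $\sum_{0<\gamma\leq T}\min\!\big(\tfrac{|\omega|}{V}\mathcal{R}(\gamma),2\big)$, so it suffices to show this is $o(N(T))$. Here $X=T^{1/(16(\log\log T)^6)}$, so $\log X\asymp\log T/(\log\log T)^6$ and, in the stated range of $\omega$, $\tfrac{|\omega|}{V}=o\big((\log\log T)^{-6}(\log\log\log T)^{-1}\big)$.

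First I would dispose of every term of $\mathcal{R}(\gamma)$ except $r_{\chi_i,4}$, using the mean value estimates for Dirichlet polynomials sampled at the ordinates of $\zeta$ from Section~\ref{sec: lemmas for Thm 1} and \cite{C}. Trivially $r_{\chi_i,2}(X,\gamma)\leq\Psi(X)\ll\log\log T$; Lemma~\ref{Dir poly mmt} with $k=1$ (for $r_{\chi_i,3}$ preceded by a Cauchy--Schwarz in the $\sigma$-integral, using $\int_{1/2}^{\infty}X^{1/2-\sigma}\,d\sigma=1/\log X$) gives $\sum_{0<\gamma\leq T}r_{\chi_i,j}(X,\gamma)^2\ll N(T)$ and hence $\sum_{0<\gamma\leq T}r_{\chi_i,j}(X,\gamma)\ll N(T)$ for $j=1,2,3$; and $\sum_{0<\gamma\leq T}\tfrac{\log(m_i|\gamma|)}{\log X}\ll N(T)\tfrac{\log T}{\log X}\ll N(T)(\log\log T)^6$. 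The same mean value bound applied to the Dirichlet polynomial in $E_{\chi_i}(X,\gamma)$ gives $\sum_{0<\gamma\leq T}E_{\chi_i}(X,\gamma)\ll N(T)\log T$. Multiplied by $\tfrac{|\omega|}{V}$, each of these contributes $o(N(T))$ to the sum of minima.

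The main obstacle is the remaining term $r_{\chi_i,4}(X,\gamma)=\big(1+\log^{+}\tfrac{1}{\eta_{\chi_i}(\gamma)\log X}\big)\tfrac{E_{\chi_i}(X,\gamma)}{\log X}$, whose logarithmic factor blows up exactly when some zero of $L(s,\chi_i)$ lies abnormally close to $\rho$; this is where Hypothesis~$\mathscr{H}_{0,\chi_i}$ is essential. I would fix $\epsilon\in(0,1)$ and split $\{0<\gamma\leq T\}$ into $\mathcal{B}_\epsilon=\{\gamma:\eta_{\chi_i}(\gamma)\leq\epsilon/\log T\text{ for some }i\}$ and its complement $\mathcal{G}_\epsilon$. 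On $\mathcal{B}_\epsilon$ I would use only $\min(\cdot,2)\leq2$, so this set contributes $\leq2\#\mathcal{B}_\epsilon$, and $\limsup_{T\to\infty}\#\mathcal{B}_\epsilon/N(T)\leq\delta(\epsilon)$ with $\delta(\epsilon)\to0$ as $\epsilon\to0^{+}$, by $\mathscr{H}_{0,\chi_i}$ summed over $1\leq i\leq N$. On $\mathcal{G}_\epsilon$ one has $\tfrac{1}{\eta_{\chi_i}(\gamma)\log X}\leq\tfrac{\log T}{\epsilon\log X}\ll_\epsilon(\log\log T)^6$, so the logarithmic factor is $\ll_\epsilon\log\log\log T$, whence $\sum_{\gamma\in\mathcal{G}_\epsilon}r_{\chi_i,4}(X,\gamma)\ll_\epsilon\log\log\log T\cdot\tfrac{1}{\log X}\sum_{0<\gamma\leq T}E_{\chi_i}(X,\gamma)\ll_\epsilon N(T)(\log\log T)^6\log\log\log T$. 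Combining with the previous paragraph, $\sum_{\gamma\in\mathcal{G}_\epsilon}\mathcal{R}(\gamma)\ll_\epsilon N(T)(\log\log T)^6\log\log\log T$, so the $\mathcal{G}_\epsilon$-contribution to $\sum_{0<\gamma\leq T}\min\!\big(\tfrac{|\omega|}{V}\mathcal{R}(\gamma),2\big)$ is at most $\tfrac{|\omega|}{V}\sum_{\gamma\in\mathcal{G}_\epsilon}\mathcal{R}(\gamma)=o_\epsilon(N(T))$. Hence $\sum_{0<\gamma\leq T}\min\!\big(\tfrac{|\omega|}{V}\mathcal{R}(\gamma),2\big)\leq o_\epsilon(N(T))+(2\delta(\epsilon)+o(1))N(T)$; letting $T\to\infty$ and then $\epsilon\to0^{+}$ shows this is $o(N(T))$, completing the proof. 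The one real difficulty, as flagged, is the coupling between $r_{\chi_i,4}$ and the spacing of the $\gamma_{\chi_i}$ near $\gamma$; the rest is routine moment estimation of Dirichlet polynomials at the zeros of $\zeta$, already prepared in the preceding section.
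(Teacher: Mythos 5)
Your proposal is correct and follows essentially the same route as the paper: approximate $\mathcal{L}(\rho)$ by $\Re\CMcal{P}_{\mathcal{L}}(\gamma)$ via Lemma~\ref{log L approximate formula}, bound the difference of exponential sums by the averaged remainders using Lemma~\ref{Dir poly mmt}, and invoke Hypothesis $\mathscr{H}_{0,\chi_j}$ to control $r_{\chi_j,4}$ outside a bad set of density $o(1)$. The only cosmetic difference is that the paper removes, from both sums at the outset, the ordinates with $\eta_{\chi_j}(\gamma)\leq \frac{1}{\log T\log\log T}$ (a single threshold already of density $o(N(T))$ by the hypothesis), whereas you keep all $\gamma$, use $\min(|u-v|,2)$, and perform the $\epsilon$-then-$T$ double limit; both yield the same bound $\ll \frac{|\omega|}{\sqrt{\log\log T}}N(T)(\log\log T)^{6}\log\log\log T=o(N(T))$.
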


\begin{proof}

Let $\chi_j$ denote one of the characters $\chi_1, \dots, \chi_N$. For each $0<\g \leq T$, we have that either 
$\eta_{\chi_j}(\g)=\min_{\g_{\chi_j}} |\g-\g_{\chi_j}|  \leq \frac{1}{\log T \log\log T}$ 
or $\eta_{\chi_j}(\g) > \frac{1}{\log T \log\log T}$. 
By Hypothesis $\mathscr{H}_{0, \chi_j}$, we know that
\begin{equation}\label{eq: no of gamma star}
\sum_{\substack{0 < \g \leq T, \\ \eta_{\chi_j}(\g) \leq \frac{1}{\log T \log\log T}  }} 1 = o(N(T)). 
\end{equation}
for each $j$. We let $\g^\ast$ denote any $\gamma$ such that $ \eta_{\chi_j}(\g) > \frac{1}{\log T \log\log T} $ for all $\chi_j$. 
By \eqref{eq: no of gamma star},  the set of  $\gamma$'s not satisfying this has cardinality $o(N(T))$.  Thus,  we may restrict the sums over the $\gamma$'s in our proposition  to $\gamma^\ast$'s at the cost of an error term of size $o(N(T))$.

Now, by the inequality
\[
\big| e^{ix} - e^{iy}\big| = \Big|  \int_x^y e^{iu} \mathop{du} \Big| \leq |x-y|
\]
for $x$ and $y$ real, and by Lemma \ref{log L approximate formula}, it suffices to bound 
\be\label{remainder bd}
\frac{|\omega|}{\sqrt{\log\log T}}\sum_{k=1}^{4}  \sum_{0<\g^\ast \leq T}  r_{\chi_j , k}(X, \g^\ast)
        +O\Big(\frac{|\omega|}{\sqrt{\log\log T}}  \sum_{0<\g^\ast \leq T}  \frac{  \log(m_j |\g^*|)}{\log X}\Big)
\ee
for each $\chi_j$, where  
        \be\notag
        \begin{split}
        r_{\chi_j, 1}(X, \g^\ast)&=\Big|\sum_{p\leq X^2}\frac{(1-w_X(p))\chi_j(p)}{p^{\frac12+i\g^\ast}}\Big|, \qquad
        r_{\chi_j, 2}(X, \g^\ast)=\Big|\sum_{p\leq X}\frac{w_X(p^2)\chi_j(p^2)}{p^{1+2i\g^\ast}}\Big|, \\
        &r_{\chi_j, 3}(X, \g^\ast)=\frac{1}{\log X} \int_{1/2}^{\infty} X^{\frac 1 2-\s}\Big|\sum_{p\leq X^2}\frac{\Lambda_X(p)\chi_j(p) \log(Xp)}{p^{\s+i\g^\ast}}\Big|\mathop{d\s},
        \\  
        &\qquad r_{\chi_j, 4}(X, \g^\ast)=\Big(1+\log^+\frac{1}{\eta_{\chi_j}(\g^\ast)\log X}\Big)\frac{E_{\chi_j}(X, \g^\ast)}{\log X},
        \end{split}
        \ee
and
$$
        E_{\chi_j}(X, \g^\ast) =\Big|\sum_{n\leq X^2} \sdfrac{\Lambda_X(n)\chi_j(n)}{n^{\s_1+i\g^\ast}}\Big|+\log(m_j|\g^\ast|). 
$$

For $r_{\chi_j, 1}(X, \g^\ast)$ observe that $(1-w_X(p))\chi_j(p) \ll \frac{\log p}{\log X}$. Thus,
\[
\sum_{0<\g^\ast \leq T}  r_{\chi_j, 1}(X, \g^\ast) \ll
\sqrt{N(T)} \bigg( \sum_{0<\g^\ast \leq T}  r_{\chi_j, 1}(X, \g^\ast)^2\bigg)^{\frac12}
\ll N(T),
\]
where we have applied Lemma \ref{Dir poly mmt} in an obvious way. By the same lemma, since $w_X(p^2) \leq 1$, it follows that $
\sum_{0<\g^\ast \leq T}  r_{\chi_j , 2}(X, \g^\ast) 
\ll N(T)
$.
The same bound can be obtained for the average of $r_{\chi_j , 3}$ by the proof of Proposition 4.3 in~\cite{C}. It remains to bound the average of $ r_{\chi_j , 4}$. By our choice of $X$ and the definition of $\gamma^\ast$, 
\[
\sum_{0<\g^\ast \leq T} r_{\chi_j , 4}(X, \g^\ast)
\ll \frac{1}{\log X} \log\bigg(\frac{\log T \log\log{T}}{\log X}\bigg) 
 \sum_{0<\g^\ast \leq T} \bigg(
\Big|\sum_{n\leq X^2} \sdfrac{\Lambda_X(n)\chi_j(n)}{n^{\s_1+i\g^\ast}}\Big|+\log(m_j|\g^\ast|)
\bigg).
\] 
By Cauchy's inequality and then Lemma \ref{Dir poly mmt},
\begin{align*}
&\sum_{0<\g^\ast \leq T} \bigg(
\Big|\sum_{n\leq X^2} \sdfrac{\Lambda_X(n)\chi_j(n)}{n^{\s_1+i\g^\ast}}\Big|+\log(m_j|\g^\ast|)
\bigg)
\\
\ll 
& \sqrt{N(T)} \bigg(\sum_{0<\g^\ast \leq T} \Big(
\Big|\sum_{n\leq X^2} \sdfrac{\Lambda_X(n)\chi_j(n)}{n^{\s_1+i\g^\ast}}\Big|+\log(m_j|\g^\ast|)
\Big)^2 \bigg)^{1/2} \ll N(T) \log T .
\end{align*}
Therefore,
\[
\sum_{0<\g^\ast \leq T} r_{\chi_j , 4}(X, \g^\ast)
\ll N(T) \log\log\log{T} \frac{\log T}{\log X}\ll N(T)(\log\log T)^{6} \log\log\log T. 
\]
Hence, \eqref{remainder bd} is
$$
\ll  \frac{|\omega|}{\sqrt{\log\log T}} N(T) (\log\log T)^{6} \log\log\log T.
$$
This is $o(N(T))$ provided that $$|\omega| =o\Big(  \frac{\sqrt{\log\log T}}{(\log\log T)^{6}\log\log\log T}\Big).$$
This completes the proof of Proposition~\ref{last prop} as well as Theorem~\ref{distr of linear combination}.

\end{proof}

\section{Proof of Theorem~\ref{n-variate}}

We use an argument similar to the one in~\cite{Hsu Wong}.  
First, it is well known that a  vector $\mathbf X=(X_1, \dots, X_N)$ of real-valued random variables has an $N$-variate normal distribution (see Section~\ref{intro} for the definition) if and only if every linear combination $a_1X_1+a_2X_2+\cdots+a_NX_N$, with each $a_j\in \mathbb R$, has a univariate normal distribution. (If the $a_j$ are all zero, we  regard the sum as being normally distributed with mean $0$ and variance $0$.) 
Now, by Theorem~\ref{distr of linear combination}, 
\be\label{lin comb}
a_1 \frac{ \log|L(\r, \chi_1)|}{\sqrt {\tfrac12 \log\log T}}+a_2 \frac{ \log|L(\r, \chi_2)|}{\sqrt {\tfrac12 \log\log T}} + \dots + a_N\frac{\log|L(\r, \chi_N)|}{\sqrt {\tfrac12 \log\log T}} 
\ee
 is approximately normal with mean $0$ and variance $a_1^2+\cdots+a_N^2$.
 Thus, 
\be\notag
\Bigg(
 \frac{ \log|L(\r, \chi_1)|}{\sqrt {\tfrac12 \log\log T}}, \ \dots \ ,\frac{\log|L(\r, \chi_N)|}{\sqrt {\tfrac12 \log\log T}} \Bigg)
\ee
has approximately an $N$-variate normal distribution with mean vector $\mathbf 0_N$. 

Next, we determine its covariance matrix $\mathbf C$. Abusing notation slightly by writing $X_i$ for $\log|L(\r, \chi_i)|/{\sqrt {\tfrac12 \log\log T}}$, we see that $\mathbf C$ has entries  
 $\sigma_{i, j}=\mathrm{Cov}(X_i, X_j)$ when $i\neq j$, and $\mathrm{Var}(X_i)$ when $i=j$.
 Taking $a_i =1$ and all the other $a_j=0$ in \eqref{lin comb}, we see that $\sigma_{i,i}=1$. Furthermore, 
 for any $i\neq j$, if we take $a_i=a_j=1$ and the other $a_k$ to be $0$ in \eqref{lin comb}, we then see that 
 $\mathrm{Var}(X_i+X_j)=1+1=\mathrm{Var}(X_i)+\mathrm{Var}(X_j)$. On the other hand, one has in general that $\mathrm{Var}(X_i+X_j) =\mathrm{Var}(X_i)+\mathrm{Var}(X_j) + 2\mathrm{Cov}(X_i, X_j)$.  Thus
 $\s_{i, j} =0$. It now follows that $\mathbf C =\mathbf I_N$. This proves the first assertion of Theorem~\ref{n-variate}. The second, that the $X_i$ are approximately mutually independent, follows from the fact that the covariance matrix is diagonal.

 
\section{Proof of Theorem~\ref{main thm 3}}  

We begin by proving  the following lemma.
  
\begin{lem}\label{lem 9}
Let $\chi_i$ and $\chi_j$ be distinct primitive Dirichlet characters. Then for all $\g\in (0,T]$ except possibly for a subset of cardinality at most $o(N(T))$, we have
    \[
   \big| \log|L(\r,\chi_i)|-\log|L(\r,\chi_j)| \big| \, > (\log\log T)^{1/4}.
    \]
Similarly, for all $\g\in (0,T]$ except possibly for a subset of cardinality at most  $o(N(T))$, we have
    \[
   \big| \log|L(\r,\chi_i) | \big| \, > (\log\log T)^{1/4}.
    \]
   
\end{lem}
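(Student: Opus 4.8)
The plan is to recognize each of the two quantities in the lemma as a special instance of the linear combination $\mathcal L(\rho)$ to which Theorem~\ref{distr of linear combination} applies, and then to note that requiring $|\mathcal L(\rho)|\le(\log\log T)^{1/4}$ forces the \emph{normalized} variable to lie in a symmetric window about $0$ whose width shrinks to $0$ with $T$. For the first assertion I would invoke Theorem~\ref{distr of linear combination} with the two characters $\chi_i,\chi_j$ and coefficients $a_i=1$, $a_j=-1$, so that $\mathcal L(\rho)=\log|L(\rho,\chi_i)|-\log|L(\rho,\chi_j)|$ and the relevant variance is $\tfrac12(1^2+(-1)^2)\log\log T=\log\log T$. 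For the second assertion I would take the single character $\chi_i$ with coefficient $a_i=1$, so that $\mathcal L(\rho)=\log|L(\rho,\chi_i)|$ and the variance is $\tfrac12\log\log T$. Both linear combinations are nontrivial, and $\chi_i,\chi_j$ are nonprincipal primitive characters (the only primitive principal character corresponds to $\zeta(s)$, which our hypotheses exclude), so Theorem~\ref{distr of linear combination} is applicable in both cases.

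In either case the exceptional set is precisely the set of $0<\gamma\le T$ for which
\[
\frac{\mathcal L(\rho)}{\sqrt{\tfrac12(a_1^2+\dots+a_N^2)\log\log T}}\in[-\delta_T,\delta_T],
\]
where $\delta_T$ is a fixed multiple of $(\log\log T)^{1/4}/\sqrt{\log\log T}=(\log\log T)^{-1/4}$ (one has $\delta_T=(\log\log T)^{-1/4}$ in the first case and $\delta_T=\sqrt{2}\,(\log\log T)^{-1/4}$ in the second); in particular $\delta_T\to0$ as $T\to\infty$. Fixing $\varepsilon>0$, for all sufficiently large $T$ we have $[-\delta_T,\delta_T]\subseteq[-\varepsilon,\varepsilon]$, so the proportion of exceptional $\gamma$ is at most the proportion of $\gamma$ with the normalized variable lying in $[-\varepsilon,\varepsilon]$. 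By Theorem~\ref{distr of linear combination} this latter proportion tends to $\frac{1}{\sqrt{2\pi}}\int_{-\varepsilon}^{\varepsilon}e^{-x^2/2}\,dx\le\frac{2\varepsilon}{\sqrt{2\pi}}$ as $T\to\infty$. Hence $\limsup_{T\to\infty}\frac{1}{N(T)}\#\{\text{exceptional }\gamma\}\le\frac{2\varepsilon}{\sqrt{2\pi}}$, and letting $\varepsilon\to0^+$ gives that the exceptional set has cardinality $o(N(T))$, as claimed.

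The only point needing care is that Theorem~\ref{distr of linear combination} is stated for \emph{fixed} intervals $[A,B]$ rather than for the $T$-dependent interval $[-\delta_T,\delta_T]$; the sandwiching with a fixed $\varepsilon$ followed by $\varepsilon\to0^+$, as above, is exactly what bridges this gap. Beyond that the argument is entirely routine, so I do not anticipate a genuine obstacle.
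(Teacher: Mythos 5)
Your proposal is correct and follows essentially the same route as the paper: both assertions are obtained by applying Theorem~\ref{distr of linear combination} with coefficients $(1,-1)$ and $(1)$ respectively, so the exceptional set corresponds to the normalized variable falling in an interval of width about $(\log\log T)^{-1/4}$, which has vanishing proportion. Your $\varepsilon$-sandwiching step is a slightly more careful justification of the passage to the $T$-dependent shrinking interval than the paper's direct substitution, but the argument is the same in substance.
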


\begin{proof}
By Theorem \ref{distr of linear combination}, the sequence 
$$
   \log|L(\r,\chi_i)|-\log|L(\r,\chi_j)| ,   \quad            0<\g\leq T,
$$
is approximately normal  with mean $0$ and variance $\log\log T$. Thus,
    \begin{align*}
    \#\bigs\{0<\g \leq T :  \   \big| \log|L(\r,\chi_i)|&-\log|L(\r,\chi_j)|  \big|\leq (\log\log T)^{\frac14}\bigs\}  \\
 = &\frac{1}{\sqrt{2\pi}}N(T) \int_{-(\log\log T)^{-1/4}}^{(\log\log T)^{-1/4}} e^{-\frac{x^2}{2}} \mathop{dx} +o(N(T)) \\
    = &o(N(T)),
      \end{align*}
which was to be shown.  The proof of the second assertion is similar.
\end{proof}  


We are now ready to count solutions of 
   \[
   F(\rho)=  c_1L(\rho, \chi_1)+c_2 L(\rho, \chi_2)+\dots+c_N L(\rho, \chi_N)=a.
   \] 
For each pair of indices $1\leq i\neq  j  \leq N$, let
$$
\mathscr A_{i, j} =   \bigs\{ 0<\g \leq T : \big| \log|L(\r,\chi_i)|-\log|L(\r,\chi_j)| \big| \, \leq (\log\log T)^{1/4} \bigs\}.
$$
Notice that $\mathscr A_{i,j}=\mathscr A_{j,i}.$
By Lemma~\ref{lem 9},  
$$
\# \mathscr A_{i, j}=o(N(T)).
$$ 
Hence the set
$$
\mathscr A= \bigcup_{1\leq i<j \leq N} \mathscr A_{i, j}
$$
 also has cardinality $o(N(T))$. Similarly, by Lemma~\ref{lem 9},  if 
$$
\mathscr B_{i} =   \bigs\{ 0<\g \leq T : \big| \log|L(\r,\chi_i)|  \big| \, \leq (\log\log T)^{1/4} \bigs\},
$$
then
$$
\mathscr B=\bigcup_{1\leq i  \leq N} \mathscr B_{i},
$$
has cardinality  $o(N(T)).$

For a given  zero $\r$ with $0<\g \leq T$ and $\r\notin \mathscr  A \cup  \mathscr B$,  
let $i_0$ be the (unique) index  for which
$$
 |L(\rho, \chi_{i_0})|= \max_{1\leq i \leq N} |L(\rho, \chi_i)|. 
$$
By Hypothesis $\mathscr D$, none of our $L$-functions vanishes at $\r$. Furthermore, no $c_i$ equals zero. Hence, we may write
    \begin{equation}\label{eq: F rho}
    F(\rho)= c_{i_0} L(\rho, \chi_{i_0}) \bigg(1+\sum_{i\neq i_0} \frac{c_i}{c_{i_0}} \frac{L(\rho, \chi_{i})}{L(\rho, \chi_{i_0})} \bigg).
    \end{equation}
By Lemma~\ref{lem 9}, we must have
\[ 
\log \Big|  \frac{L(\rho, \chi_{i})}{L(\rho, \chi_{i_0})} \Big| < -(\log\log T)^{\frac14}.
\]
Thus, for each $i\neq i_0$,
\[
\big|L(\rho, \chi_{i})\big| \leq e^{-(\log\log T)^{1/4}} \big|L(\rho, \chi_{i_0})\big|.
\]
It follows that
$$
\bigg| \sum_{i\neq i_0} \frac{c_i}{c_{i_0}} \frac{L(\rho, \chi_{i})}{L(\rho, \chi_{i_0})} \bigg|
\ll e^{-(\log\log T)^{1/4}}.
$$
Thus, by \eqref{eq: F rho}, we see that
\be\label {F} 
|F(\rho)|
=  | c_{i_0} L(\rho, \chi_{i_0})  |  \  \big(1+ O ( e^{-(\log\log T)^{1/4}} )\big).
\ee
Since $\r \notin B$, we have either that
$$
| L(\rho, \chi_{i_0}) | < e^{-(\log\log T)^{1/4}} 
\qquad \hbox{or} \qquad
|L(\rho, \chi_{i_0}) | > e^{(\log\log T)^{1/4}} .
$$
In either case,  for $T$ sufficiently large, $F(\rho)\neq a$ by  \eqref{F}.   
 This holds for all but $o (N(T))$ values of $\r$ with $0<\g \leq T$, so the proof of the theorem is complete.



\end{document}